\numberwithin{equation}{section}
\date{}
\newtheorem{theorem}{Theorem}[section]
\newtheorem{proposition}[theorem]{Proposition}
\newtheorem{lemma}[theorem]{Lemma}
\newtheorem{remark}{Remark}
\newcommand{\be}{\begin{equation}}
\newcommand\ee{\end{equation}}
\newcommand\bes{\begin{eqnarray}}
\newcommand\ees{\end{eqnarray}}
\newcommand\bess{\begin{eqnarray*}}
\newcommand\eess{\end{eqnarray*}}
\title{The global strong solution to a 3D compressible system with fractional viscous term}
\author{Mengqian Liu,\ \ Lei Niu\thanks{Corresponding to: lei.niu@dhu.edu.cn(L. Niu)},\ \ Zhigang Wu\\
Department of Mathematics, Donghua University, Shanghai, P. R. China.}
\begin{document}

\maketitle
\renewcommand{\thefootnote}{\fnsymbol{footnote}}

\maketitle
{{\bf  Abstract:}} In this paper, we study the Cauchy problem to the 3D fractional compressible isentropic generalized Navier-Stokes equations for viscous compressible fluid with one Levy diffusion process. We obtain the existence and uniqueness of global strong solution for small initial data by providing several commutators via the Littlewood-Paley theory. Moreover, we derive $L^2$-decay rate for the highest derivative of the strong solution without decay loss by using a cancellation  of a low-medium-frequency quantity. Our results improve the results provided in [J. Differential Equations 377(2023): 369-417].

\textbf{{\bf Key Words}:}
Fractional Navier-Stokes equations, global well-posedness, optimal decay rate.

\textbf{{\bf MSC2010}:} 35A09; 35B40; 35Q35.

\section{Introduction}
In this paper, we consider the 3D fractional compressible isentropic Navier-Stokes equations which describe the motion of viscous compressible fluid with one Levy diffusion process \cite{Wangs1,Wangs2} given by
\begin{equation} \label{1.1}
\left\{\begin{array}{ll}
\tilde\rho_{t}+\nabla\cdot(\tilde\rho \mathbf{u})=0,\ (x,t)\in\mathbb{R}^3\times(0,\infty)\\
\tilde\rho \mathbf{u}_{t}+\mu(-\Delta)^\alpha \mathbf u+\tilde\rho\mathbf{u}\cdot\nabla\mathbf u+\nabla P(\tilde\rho)=0,\ (x,t)\in\mathbb{R}^3\times(0,\infty),
\end{array}\right.
\end{equation}
where initial data satisfy
\begin{equation}\label{1.2}
(\tilde\rho,\mathbf{u})(0,x)=(\tilde\rho_0,\mathbf{u}_0)(x)\rightarrow(\tilde\rho_\infty,0),\ as\ |x|\rightarrow\infty.
\end{equation}
Here $\tilde\rho$ and $\mathbf u=(u^1,u^2,u^3)$ are the unknown density and velocity respectively, the pressure $P=P(\tilde\rho)$ is given by the power law $P=A\tilde\rho^\gamma$ with constants $\gamma>1$ and $A>0$, and $\mu>0$ denotes the coefficient of viscosity. The fractional Laplace operator $(-\Delta)^\alpha$ is defined through the Fourier transform by
$$\widehat{(-\Delta)^\alpha f}(\xi)=|\xi|^{2\alpha}\hat f(\xi),$$
where $\alpha$ is a positive constant and $\hat{f}$ is the Fourier transform of the function $f$. We write $\Lambda=(-\Delta)^{\frac{1}{2}}$ for notational convenience.

The model \eqref{1.1} is a direct extension of the classical compressible isentropic Navier-Stokes equations when $\alpha=1$, which has been widely studied, see, for example \cite{DUYZ,feireisl1,feireisl4,guo1,HHW1,HLX,HZ0,HZ,J1,J2,LW,MN1,MN2,Li,LX,WZ,Xin,ZZ} and the references therein. In particularly, Matsumura and Nishida \cite{MN1,MN2} first obtained the global existence of small solution in $H^3(\mathbb{R}^3)$, and further got the $L^2$-decay rate of the solution as the heat equation under an additional assumption: the initial perturbation is small in $L^1$. Duan etal. \cite{DUYZ} studied $L^p$ $(2\leq p\leq 6)$ decay rate for the system with external force and they need not the smallness of the initial perturbation in $L^1$-space. By replacing $L^1$-space by $\dot{B}_{1,\infty}^{-s}$ with $s\in[0,1]$, Li and Zhang \cite{Li} achieved a faster $L^2$-decay rate of  the solution. Guo and Wang \cite{guo1} developed a pure energy method to derive the $L^2$-decay rate of the solution and its derivative in $H^l\cap\dot{H}^{-s}$ with $l\geq3$ and $s\in[0,\frac{3}{2})$.

There are also some results of the compressible Navier-Stokes equations in Besov spaces. Based on scaling considerations for this system, a breakthrough was provided by Danchin \cite{D1}. The author established the global existence of strong solutions for the initial data in the vicinity of the equilibrium in $(\dot{B}_{2,1}^{\frac{d}{2}}\cap\dot{B}_{2,1}^{\frac{d}{2}-1})\times\dot{B}_{2,1}^{\frac{d}{2}-1}$ with $d\geq2$. Subsequently, Charve and Danchin \cite{CD1} and Chen, Miao and Zhang \cite{CMZ2} extended Danchin's result to the general $L^p$ critical Besov spaces. Haspot \cite{H} obtained the same results as in \cite{CD1,CMZ2} by using Hoff's viscous effective flux. Later, Chen etal. verified the ill-posedness in \cite{CMZ3}, which means that the critical Besov space in \cite{CD1,CMZ2,H} for the compressible Navier-Stokes equations can be regraded as the largest one in which the system is well-posed. Recently, Peng and Zhai \cite{peng} proved the global existence for $d$-dimensional compressible Navier-Stokes equations without heat conductivity for $d\geq2$ in $L^2$-framework. We refer to the readers \cite{CD2,D3,DH,XX,ZLZ} for more results on critical spaces for the isentropic or non-isentropic compressible Navier-Stokes equations.

Now, we shall go back to the system (\ref{1.1}). It is physically relevant by replacing the standard Laplacian operators since the fractional diffusion operators model the so-called anomalous diffusion. See \cite{Abe,Jara,Mellet} and the references therein for the topic in physics, probability and finance. Some important results on fractional dissipation for the other fluid models were developed in \cite{Ca,Ki}. Recently, the existence and uniqueness of the global solution of system \eqref{1.1} have been proved in \cite{Wangs1,Wangs2} as stated in Lemma \ref{1 1.10}. In order to enclose the energy estimates, they actually need take advantage of the nonlocal operator $D^{m+\alpha}$ with $m=0,1,2,3$ and establish one elaborate spectral theory of one linearized nonlocal operator involved in the fractional dissipation viscosity $\Lambda^{2\alpha}$ for the system, where the eigenvalues and the eigenvectors does depend upon the fractional order derivative exponent $\alpha$.

\begin{lemma}(\cite{Wangs1,Wangs2})\label{1 1.10}
Let $\alpha\in(\frac{1}{2},1]$. There exist constants $C_0>0$ and $\varepsilon_0$ such that if
$$E_0=\|(\rho_0,\mathbf{u}_0)\|_{H^{3\alpha+1}}+\|(\rho_0+\mathbf{u}_0)\|_{L^1}<\varepsilon_0,$$
then the initial value problem \eqref{1.1}-\eqref{1.3} has a unique solution $(\rho,\mathbf{u})$ globally in time, which satisfy
$$\rho(t,x)\in\mathcal{C}^0(0,\infty;H^{3\alpha+1})\cap\mathcal{C}^1(0,\infty;H^{3\alpha}),$$
$$\mathbf{u}(t,x)\in\mathcal{C}^0(0,\infty;H^{3\alpha+1})\cap\mathcal{C}^1(0,\infty;H^{\alpha+1}),$$
and it has the decay rate
$$\|(\rho,\mathbf{u})(t)\|_{H^2}\leq C_0E_0(1+t)^{-\frac{3}{4\alpha}}.$$
\end{lemma}

Due to the appearance of the fractional dissipation in (\ref{1.1}), it is not easy to obtain the global existence in $H^s(\mathbb{R}^d)$ with $d=2,3$ and $s>\frac{d}{2}$ as in \cite{LYZ} for the classical compressible isentropic Navier-Stokes equations. The main reason is the fractional dissipation is weaker than the classical one, which ultimately results that we have to use the $L^\infty$-estimate of the first derivative of the unknowns ($H^s\hookrightarrow L^\infty$ when $s>\frac{d}{2}$). Hence, it is of interest to further relax the requirement of the regularity of the strong solution in this paper. In addition, there are actually some difficulties to further reduce the regularity index to $s+1-\alpha$, such as the nonlinear term
\begin{equation*}
\begin{array}{ll}
\displaystyle\quad\sum\limits_{j\geq 0}2^{2j(s+1-\alpha)}|\langle\Delta_j(\rho{\rm div}\mathbf{u}),\rho_j\rangle|
\lesssim\|\rho{\rm div}\mathbf{u}\|_{\dot{H}^{s+1-\alpha}}\|\rho\|_{\dot{H}^{s+1-\alpha}}
\lesssim\|{\rm div}\mathbf{u}\|_{H^{s+1-\alpha}}\|\rho\|_{H^{s+1-\alpha}}^2,
\end{array}
\end{equation*}
where we need extra 1 order regularity of the velocity $\mathbf{u}$. However, from the system \eqref{1.1}, we know that $\mathbf{u}$ only can achieve extra $\alpha\in(1/2,1)$ order regularity in $L^1$ time campared to initial data. Thus, it is not easy to close the energy argument in $H^{s+1-\alpha}$ space.

The aim of this paper is to further refine the global existence result and the decay rates in \cite{Wangs1,Wangs2}. As for the global existence, we do not need the initial condition in $L^1$-space when deriving the a priori estimate. Moreover, we relax the requirement of the regularity $H^{3\alpha+1}(\mathbb{R}^3)$ in \cite{Wangs2} to $H^{s+1}(\mathbb{R}^3)$ with $s>\frac{3}{2}$. To achieve the new estimate \eqref{1.120}, we first developed a large number of complicated commutators, for instance, (\ref{3.2}), (\ref{3.5}) and (\ref{3.9})-(\ref{3.12}). These commutators on the fractional differential operator eventually help us to derive the priori estimate (\ref{1.120}). We emphasize that these commutators on the fractional differential operator are basic and can also be applied to the other compressible fluid models with fractional dissipation. Inspired by \cite{LYZ}, we shall use the Littlewood-Paley decomposition theory in Sobolev spaces to establish our  energy estimate. In fact, the higher refinement of the low-high decomposition in Littlewood-Paley theorem is also important to relax the requirement of the regularity in this paper. As for the decay rates of the solution, we also extend those results in  \cite{Wangs1,Wangs2}, where they only gave the optimal decay rate for the solution (didn't get the optimal decay rate of the derivatives of the solution). In fact, we can obtain the optimal decay rates for all of the derivatives (especially for the highest order derivatives) of the solution in $H^{s+1}(\mathbb{R}^3)$-framework without the smallness of the initial perturbation in $L^1$.

Specifically, our main results are stated in the following.
\begin{theorem}\label{1 1.1}
Let $\alpha\in (\frac{1}{2},\ 1)$. For any $\rho_0\in H^{s+1}(\mathbb{R}^3),\ \mathbf{u}_0\in H^{s+1}(\mathbb{R}^3)$ with $s>\frac{3}{2}$, there exists a small constant $\eta>0$ such that
\begin{equation}\label{1.3}
\|\rho_0\|_{H^{s+1}}^2+\|\mathbf{u}_0\|_{H^{s+1}}^2\leq\eta,
\end{equation}
so that the Cauchy problem (\ref{1.1})-(\ref{1.2}) has a unique global solution $(\tilde\rho,\ \tilde{\mathbf{u}})$ satisfying
\begin{equation}\label{1.4}
(\rho,\ \mathbf{u})\in C(\mathbb{R}^+;H^{s+1}\times H^{s+1}(\mathbb{R}^3)),\ \ \
(\nabla\rho,\ \Lambda^\alpha \mathbf{u})\in L^2(\mathbb{R}^+;H^s\times H^{s+1}(\mathbb{R}^3))
\end{equation}
and
\begin{equation}\label{1.120}
\|(\rho,{\bf u})(t)\|_{H^{s+1}}^2+\int_0^t\|\nabla\rho(\tau)\|_{H^s}^2+\|\Lambda^\alpha{\bf u}(\tau)\|_{H^{s+1}}^2d\tau\lesssim\|(\rho,{\bf u})(0)\|_{H^{s+1}}^2.
\end{equation}
\end{theorem}
\begin{remark} Actually, the global existence and uniqueness of solution to \eqref{1.1}-\eqref{1.3} in dimension two can be obtained in the same method of this paper.
\end{remark}
%
%

\begin{theorem}\label{1 1.2}
Let $\alpha\in (\frac{1}{2},\ 1)$ and $d=3$. Assume that $\|(\rho_0,\ \mathbf{u}_0)\|_{H^{s+1}}$ is small and $\|(\rho_0,\ \mathbf{u}_0)\|_{L^1}$ is bounded. Then the solution $(\rho,\ \mathbf{u})$ of the Cauchy problem (\ref{1.1})-(\ref{1.2}) satisfies the following optimal decay rate for $\frac{5}{2}<\sigma_0:=s+1<\frac{3+4\alpha}{2}$:
\begin{equation}\label{1.5}
\|\Lambda^\sigma(\rho,\mathbf{u})(t)\|\lesssim(1+t)^{-\frac{3}{4\alpha}-\frac{\sigma}{2}},\ \ 0\leq \sigma\leq \sigma_0.
\end{equation}
\end{theorem}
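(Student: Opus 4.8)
The plan is to combine the a priori energy estimate \eqref{1.12} (already available from Theorem \ref{1 1.1}) with spectral/Fourier analysis of the linearized system and a bootstrap argument, following the strategy of \cite{WW}. First I would set up the perturbation $\rho=\tilde\rho-\tilde\rho_\infty$ (after normalizing) and write \eqref{1.1} as a linear part plus quadratic nonlinearity $N=(N_1,N_2)$, where the linear operator has symbol governing the coupled system $\partial_t\rho+\mathrm{div}\,\mathbf u$, $\partial_t\mathbf u+\mu|\xi|^{2\alpha}\mathbf u+c\nabla\rho$. Splitting into low and high frequencies, the low-frequency part of the Green's function decays like the fractional heat kernel, giving $\|\Lambda^\sigma e^{tL}U_0\|_{L^2_{\mathrm{low}}}\lesssim(1+t)^{-\frac{3}{4\alpha}-\frac{\sigma}{2\alpha}\cdot\alpha}$, i.e. with rate $(1+t)^{-\frac{3}{4\alpha}-\frac{\sigma}{2}}$ once one accounts correctly for the $|\xi|^{2\alpha}$ dissipation acting on $\mathbf u$ and the degenerate (only derivative-order) dissipation on $\rho$; the high-frequency part decays exponentially (or at any polynomial rate) thanks to \eqref{1.12}. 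Using the $L^1$ bound on the data, $\|U_0\|_{L^1}$ controls $\|\hat U_0\|_{L^\infty}$, which is what feeds the low-frequency estimate.

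Next I would run the Duhamel formula $U(t)=e^{tL}U_0+\int_0^t e^{(t-\tau)L}N(U)(\tau)\,d\tau$ and estimate the nonlinear term. Define
\[
\mathcal{M}(t):=\sup_{0\le\tau\le t}\ \sup_{0\le\sigma\le\sigma_0}\ (1+\tau)^{\frac{3}{4\alpha}+\frac{\sigma}{2}}\,\|\Lambda^\sigma(\rho,\mathbf u)(\tau)\|.
\]
The nonlinearities are quadratic of the form (derivative)$\times$(unknown), so in $L^1$ they are bounded by products of two $L^2$ factors, each of which carries a decay factor from $\mathcal M(\tau)$ together with the uniform $H^{s+1}$ bound $\delta$ from Theorem \ref{1 1.1}. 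Plugging these into the Duhamel integral and using the standard time-weighted convolution inequality $\int_0^t (1+t-\tau)^{-a}(1+\tau)^{-b}\,d\tau\lesssim(1+t)^{-\min(a,b)}$ (valid since here $a=\frac{3}{4\alpha}+\frac{\sigma}{2}$ and $b$ is large enough because $\frac{3}{2\alpha}>1$), I would close the estimate $\mathcal M(t)\lesssim \|U_0\|_{L^1}+\|U_0\|_{H^{s+1}}+\delta\,\mathcal M(t)$, hence $\mathcal M(t)\lesssim 1$ for $\delta$ small. The key point — and the reason the high-regularity constraint $\sigma_0<\frac{3+4\alpha}{2}$ appears — is that for the top derivative order $\sigma$ close to $\sigma_0$ one cannot afford to lose any derivative onto the nonlinearity in $L^2$; this is precisely where the cancellation of the low-medium-frequency quantity from \cite{WW} must be invoked, rewriting the worst nonlinear contribution so that the derivative falls on a low/medium-frequency piece (estimated by the slower-decaying but lower-order norm) while the high-frequency remainder is absorbed by the dissipation integral in \eqref{1.12}.

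I expect the main obstacle to be exactly this last step: controlling $\|\Lambda^{\sigma_0}(\rho,\mathbf u)\|$ with the full rate $(1+t)^{-\frac{3}{4\alpha}-\frac{\sigma_0}{2}}$ without a "decay loss." A naive Duhamel estimate on $\Lambda^{\sigma_0}$ of a quadratic term forces either $\Lambda^{\sigma_0+1}$ on one factor (not controlled uniformly, since the energy space is only $H^{s+1}$ with $s+1=\sigma_0$) or places $\sigma_0$ derivatives on one factor and measures the other in $L^\infty$, which only works after splitting frequencies. The remedy is a careful Littlewood–Paley low–medium–high trichotomy: on low and medium frequencies the extra derivative costs only a bounded factor and the slower rate is still acceptable because $\sigma<\sigma_0$ there; on high frequencies one integrates by parts in time / uses the structure $N\sim \mathrm{div}(\cdots)$ together with the exponential high-frequency decay and the space-time bound $\int_0^t\|\Lambda^\alpha\mathbf u\|_{H^{s+1}}^2\,d\tau<\infty$ and $\int_0^t\|\nabla\rho\|_{H^s}^2\,d\tau<\infty$ from \eqref{1.12}. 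Once the trichotomy is organized, the remaining computations are routine products in Sobolev spaces and time-weight bookkeeping.
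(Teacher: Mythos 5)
Your skeleton for the orders $\sigma<\sigma_0$ matches the paper: low-frequency decay via the spectral/Duhamel estimate with $\|F\|_{L^1}$, $\|F\|_{L^2}$ (Lemma \ref{4 3.1}), and a bootstrap on the weighted quantity $M(t)$ (Lemma \ref{4 3.2}); your $\mathcal{M}(t)$ is exactly the paper's $M(t)$. The genuine gap is the step you yourself flag as the crux, the highest order $\sigma=\sigma_0$, which you leave as a hope rather than a proof. Your proposed remedy --- Duhamel at order $\sigma_0$ with a low--medium--high trichotomy, ``integration by parts in time,'' the div structure, and the space-time bounds from (\ref{1.12}) --- does not work as stated: the dissipation integrals in (\ref{1.12}) carry no time weight, so they cannot produce any pointwise-in-time rate, let alone $(1+t)^{-\frac{3}{4\alpha}-\frac{\sigma_0}{2}}$; a high-frequency Duhamel estimate at order $\sigma_0$ loses derivatives on the nonlinearity (e.g.\ on $\big(\frac{\mu'}{(\kappa+\frac1a\rho)^a}-\mu\big)\Lambda^{2\alpha}\mathbf{u}$ and $\mathbf{u}\cdot\nabla\mathbf{u}$), and the semigroup's high-frequency smoothing cannot recover them, since for $\frac12<\alpha<1$ the acoustic mode only dissipates like $e^{-c|\xi|^{2-2\alpha}t}$, too weak to gain a full derivative with an integrable-in-time kernel. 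Moreover your claim that the high-frequency part ``decays exponentially thanks to (\ref{1.12})'' conflates a uniform-in-time energy bound with decay, and you mislocate the ``cancellation of a low-medium-frequency quantity'': in the paper it is not a rewriting of the worst nonlinear contribution at all, and the obstruction it removes is a \emph{linear} coupling term.

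What the paper actually does at top order (Lemmas \ref{4 4.1} and \ref{4 2.1}) is a hypocoercive energy argument: add to $\|\Lambda^{\sigma_0}(\rho,\mathbf{u})\|^2$ a small cross term $\beta_3\langle\nabla\Lambda^{\sigma_0-1}\rho,\Lambda^{\sigma_0-1}\mathbf{u}\rangle$ to generate the damping $\kappa\|\nabla\Lambda^{\sigma_0-1}\rho\|^2$ for the density; the price is the full-frequency source $\kappa\|{\rm div}\Lambda^{\sigma_0-1}\mathbf{u}\|^2$, whose low-frequency part is not controlled by the viscous dissipation $\|\Lambda^{\sigma_0+\alpha}\mathbf{u}\|^2$. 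The cancellation consists in subtracting the same identity tested against $\nabla\Lambda^{\sigma_0-1}\rho^L$, i.e.\ working with the cross term $\langle\nabla\Lambda^{\sigma_0-1}\rho^H,\Lambda^{\sigma_0-1}\mathbf{u}\rangle$, so that every uncontrollable piece becomes a purely low-medium-frequency quantity. This yields the damped inequality (\ref{4.16}) and, by Gronwall, (\ref{4.17}): $\|\Lambda^{\sigma_0}(\rho,\mathbf{u})(t)\|^2\lesssim e^{-C_2t}\|\Lambda^{\sigma_0}(\rho_0,\mathbf{u}_0)\|^2+\int_0^t e^{-C_2(t-\tau)}\|\Lambda^{\sigma_0}(\rho^L,\mathbf{u}^L)\|^2d\tau$, so the highest derivative inherits the low-frequency decay from Lemma \ref{4 3.1} without loss, and the bootstrap on $M(t)$ closes (with a superlinear $M^{1+\epsilon_2}$ term handled by Young's inequality, not the purely linear closure you wrote). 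This damped differential inequality with low-frequency forcing is the missing idea; without it, or an equivalent substitute, your argument does not close at $\sigma=\sigma_0$.
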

\begin{remark}We can also get the corresponding decay results for the two dimensional case except the highest order derivative by a similar process in the proof of Theorem \ref{1 1.2}.
\end{remark}

In this paper, we first use the Littlewood-Paley decomposition theory in Sobolev spaces to establish the energy estimate (\ref{1.120}). Then we apply the classical Friedrich's regularization method to build global approximate solutions and prove the existence of a solution by compactness arguments for the small initial data. Moreover, we verify that the solution constructed is unique. Finally, we shall deduce the optimal decay rates for all of the derivatives of the solution by using some ideas in \cite{WW}, where they considered the compressible Navier-Stokes equations with reaction diffusion and got the optimal decay rates for all of the derivatives of the solution by virtue of Fourier theory and a new observation for cancellation of a low-medium-frequency quantity. Here, we extend their result for classical compressible Navier-Stokes equations to the FNS model (\ref{1.1}).

The rest of this paper is organized as follows. In Section 2, the nonlinear problem is reformulated, and several useful lemmas and some known inequalities are given. In Section 3, we obtain the a priori estimate of the solution $(\rho, {\bf u})$ to the system (\ref{2.1})-(\ref{2.2}). Then the main result Theorem \ref{1 1.1} will be proved in Section 4. The optimal decay rates for the highest order derivatives are established in Section 5 based on the frequency decomposition given in the appendix.

\section{Preliminaries}
In this section, the initial problem for (\ref{1.1})-(\ref{1.2}) will be reformulated as follows. Define
$$\kappa=\sqrt{P'(\tilde\rho_\infty)}=\sqrt{A\gamma\tilde\rho_\infty^{\gamma-1}},\ a=\frac{2}{\gamma-1},$$
and
$$\tilde\rho=\frac{\tilde\rho_\infty}{\kappa^a}(\kappa+\frac{1}{a}\rho)^a.$$
Following the ideas in \cite{Wangs1}, we let $\tilde\rho_\infty=1$ and regard $\mu'$ as $\kappa^a\mu$. Then, we may study the equaivalent system given by
\begin{equation}\label{2.1}
\left\{\begin{array}{ll}
\rho_{t}+\mathbf u\cdot\nabla\rho+(\kappa+\frac{1}{a}\rho)\nabla\cdot\mathbf{u}=0,\\
\mathbf{u}_{t}+\frac{\mu'}{(\kappa+\frac{1}{a}\rho)^a}\Lambda^{2\alpha} \mathbf u+\mathbf{u}\cdot\nabla\mathbf u+(\kappa+\frac{1}{a}\rho)\nabla\rho=0.
\end{array}\right.
\end{equation}
The associated initial condition (\ref{1.2}) becomes
\begin{equation} \label{2.2}
(\rho,\mathbf{u})(0,x)=(\rho_0,\mathbf{u}_0)(x),\ x\in\mathbb{R}^d,
\end{equation}
with $\tilde\rho_0=\frac{1}{\kappa^a}(\kappa+\frac{1}{a}\rho_0)^a$.

Throughout this paper, the norms in the Sobolev Spaces $H^s(\mathbb{R}^d)$ are denoted respectively by $\|\cdot\|_{H^s}$. In particular, for $s=0$, we will simply use $\|\cdot\|$ to denote $L^2$-norm and $\|(f,g)\|^2=\|f\|^2+\|g\|^2$. And $H^s$ is the Sobolev space of order $s$ on $\mathbb{R}^d$ with the standard norm
$
\|f\|_{H^s}^2:=\int_{\mathbb{R}^n}(1+|\xi|^2)^s|\hat f(\xi)|^2d\xi<\infty,\ \ \hat{f}=\mathcal{F}(f),
$
where $\mathcal{F}$ denotes Fourier transform. We use $\langle f,g\rangle$ to denote the inner-product in $L^2(\mathbb{R}^d)$. $\Delta_j$ is the non-homogeneous frequency localization operators refer to \cite{BCD}. The symbol $A\lesssim B$ means that there exists a constant $c>0$ independent of $A$ and $B$ such that $A\leq cB$. The symbol $A\approx B$ represents $A\lesssim B$ and $B\lesssim A$. Finally, we denote $D_i=\partial_{x_i}\ (i=1,2,\cdots,d)$, $D^k=\partial_{x_1}^{\alpha_1}\cdots\partial_{x_d}^{\alpha_d}$ with $\alpha_1+\cdots+\alpha_d=k$.

For later use, some Sobolev inequalities are listed as follows.
\begin{lemma}\label{2 2.2}(\cite{LT,CW})
Let $s>0$. Suppose $g\in L^\infty\cap H^s(\mathbb{R})$ and $f\in C^{[s]}(Range(g))$. Then, the composition $f(g(x))\in L^\infty\cap H^s(\mathbb{R})$. Moreover, there exists a constant $C>0$, depending on $s$ and $\|g\|_{L^\infty}$, such that
\begin{equation}\label{2.6}
\|D_x^sf(g(x))\|\leq C\|f\|_{C^{[s]}}\|D^sg\|.
\end{equation}
In particular, for $s\in(0,1]$, one can directly apply the chain rule for fractional derivatives
\begin{equation}\label{2.7}
\|\Lambda_x^sf(g(x))\|\leq C\|Df\|_{L^\infty(\mathbb{R}^d)}\|\Lambda^sg\|,
\end{equation}
where C is a constant depending an $s$ and $\|g\|_{L^\infty}$.
\end{lemma}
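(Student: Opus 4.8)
This composition estimate is classical (Moser-type); I would organize the proof by the size of $s$, treating first the case $s\in(0,1]$ that is actually invoked later through $(\ref{2.7})$. For $s=1$ it is merely the chain rule $D_x\big(f(g)\big)=f'(g)\,D_xg$, which gives $\|D_xf(g)\|\le\|Df\|_{L^\infty}\|D_xg\|$. For $s\in(0,1)$ I would use the Gagliardo--Slobodeckij characterization of the homogeneous fractional seminorm in one variable,
$$
\|\Lambda^s h\|^2\;\approx\;\int_{\mathbb{R}}\int_{\mathbb{R}}\frac{|h(x)-h(y)|^2}{|x-y|^{1+2s}}\,dx\,dy,
$$
combined with the elementary Lipschitz bound $|f(g(x))-f(g(y))|\le\|Df\|_{L^\infty}\,|g(x)-g(y)|$, valid because $g$ takes values in a bounded set on which $f\in C^1$. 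Substituting $h=f(g)$ yields $\|\Lambda^s f(g)\|^2\lesssim\|Df\|_{L^\infty}^2\,\|\Lambda^s g\|^2$ at once, which is precisely $(\ref{2.7})$; observe that in this range the constant need not even depend on $\|g\|_{L^\infty}$. That $f(g)\in L^\infty$ is immediate since the range of $g$ is bounded and $f$ is continuous there, and $f(g)\in L^2$ once $f$ is normalized so that $f(0)=0$, as is the case for every nonlinearity occurring in $(\ref{2.1})$.

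For the general bound $(\ref{2.6})$ with $s>1$, write $s=m+\theta$ with $m=[s]\ge 1$ and $\theta=s-m\in[0,1)$; since in one dimension $D_x^s$ and $\Lambda^s$ differ only by the bounded Hilbert transform, it suffices to control $\Lambda^\theta D_x^m\big(f(g)\big)$. By the Fa\`a di Bruno formula, $D_x^m\big(f(g)\big)$ is a finite linear combination of products $f^{(k)}(g)\prod_{i=1}^{k}D_x^{\ell_i}g$ with $1\le k\le m$ and $\ell_1+\cdots+\ell_k=m$. I would distribute $\Lambda^\theta$ over each such product by the fractional Leibniz rule $(\ref{2.3})$, estimate $f^{(k)}(g)$ in $L^\infty$ by $\|f\|_{C^{[s]}}$, and estimate each factor $D_x^{\ell_i}g$ (or $\Lambda^{\ell_i+\theta}g$ when $\Lambda^\theta$ lands on it) in $L^{p_i}$ by the Gagliardo--Nirenberg inequality
$$
\|D_x^{\ell}g\|_{L^{p}}\;\lesssim\;\|g\|_{L^\infty}^{\,1-\ell/s}\,\|\Lambda^s g\|^{\,\ell/s},\qquad \frac1p=\frac{\ell}{2s}.
$$
Since the exponents $\ell_i/s$ together with the extra $\theta/s$ add up to $(m+\theta)/s=1$, and the reciprocals $1/p_i$ add up to $1/2$, H\"older's inequality in $x$ collapses every term into $C\big(s,\|g\|_{L^\infty}\big)\,\|f\|_{C^{[s]}}\,\|\Lambda^s g\|$, which is $(\ref{2.6})$. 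Alternatively one can bypass the combinatorics altogether by the Littlewood--Paley telescoping $f(g)-f(0)=\sum_{j}m_j\,\Delta_jg$ with $\|m_j\|_{L^\infty}\le\|Df\|_{L^\infty}$, followed by Bernstein's inequality and the summability of $2^{-js}$; this argument works for every $s>0$ but uses only $f\in C^1$.

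The one point requiring extra care in the Fa\`a di Bruno route is the term in which the Leibniz rule places $\Lambda^\theta$ on $f^{(k)}(g)$: for $k\le m-1$ one still has $f^{(k)}\in C^1$ and may apply the chain-rule bound $(\ref{2.7})$ in an $L^p$ form, the constant remaining controlled by $\|f\|_{C^{[s]}}$; and for the top term $f^{(m)}(g)(D_xg)^m$, in which $f^{(m)}$ is assumed merely continuous, one instead always routes $\Lambda^\theta$ onto a factor $D_xg$ (possible since $m\ge1$), so that $f^{(m)}(g)$ enters only through its $L^\infty$ norm, or else approximates $f^{(m)}$ uniformly on the range of $g$ by smooth functions. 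I expect this bookkeeping --- the consistent choice of the Gagliardo--Nirenberg exponents and the homogeneity check $\sum_i\ell_i+\theta=s$ --- to be the only mildly delicate step; the substance of the lemma is really the two-line Gagliardo-seminorm computation of the first paragraph.
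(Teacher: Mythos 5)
The paper does not prove this lemma at all: it is quoted verbatim from \cite{LT,CW}, so there is no in-paper argument to compare with, and your proposal has to stand on its own. The part you give for $s\in(0,1]$ does: the Gagliardo--Slobodeckij characterization of $\|\Lambda^s\cdot\|$ plus the Lipschitz bound $|f(g(x))-f(g(y))|\le\|Df\|_{L^\infty}|g(x)-g(y)|$ is a complete and standard proof of (\ref{2.7}), and (\ref{2.7}) is the only piece of the lemma the paper really leans on.

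For (\ref{2.6}) with non-integer $s=m+\theta$, however, there is a genuine gap exactly at the point you flag. The fractional Leibniz inequality (\ref{2.3}) does not let you choose where $\Lambda^\theta$ lands: it always produces the term $\|\Lambda^\theta f^{(k)}(g)\|_{L^{p_3}}\|\prod_i D^{\ell_i}g\|_{L^{p_4}}$ alongside the one you want, so ``routing $\Lambda^\theta$ onto a factor $D_xg$'' is not an available move. For $k\le m-1$ this term is harmless (you control $\Lambda^\theta f^{(k)}(g)$ by (\ref{2.7}) applied to $f^{(k)}$, which needs $f^{(k+1)}\in L^\infty$, fine), but for the top term $k=m$ you need some modulus of smoothness of $f^{(m)}$ beyond continuity, and your fallback --- approximating $f^{(m)}$ uniformly by smooth functions --- does not repair this: the resulting bounds involve derivatives of the approximants and blow up, so no estimate with constant $\|f\|_{C^{[s]}}$ survives the limit. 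Likewise the aside that the Littlewood--Paley telescoping $f(g)-f(0)=\sum_j m_j\Delta_j g$ ``works for every $s>0$ but uses only $f\in C^1$'' is false: since $m_j\Delta_jg$ is not spectrally localized, summing requires bounds on derivatives of $m_j$, i.e.\ more regularity of $f$ (this is how Lemma~\ref{2 2.9}, with $f$ smooth, is proved in \cite{BCD}). So your scheme closes for integer $s$ (the classical Moser estimate with $f\in C^s$) and for $s\in(0,1]$, but for fractional $s>1$ you must either assume $f^{([s])}$ H\"older (or $f\in C^{[s]+1}$, or smooth as in Lemma~\ref{2 2.9}) or fall back on the references \cite{LT,CW}; as written, the argument does not establish (\ref{2.6}) under the bare hypothesis $f\in C^{[s]}$.
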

\begin{lemma}\label{2 2.3}(\cite{MN2})
Assume that $f(x)$ is a function on $\mathbb{R}^3$.

(i)If $f(x)\in H^s$ with $s>\frac{3}{2}$, then $f\in L^\infty$, and
\begin{equation}\label{2.8}
\|f\|_{L^\infty}\leq C\|f\|_{H^s}.
\end{equation}
(ii)If $f(x)\in H^1$, then $f\in L^p$ for any $p\in[2,6]$ and
\begin{equation}\label{2.9}
\|f\|_{L^p}\leq C\|f\|_{H^1},
\end{equation}
where $C$ is a positive constant.
\end{lemma}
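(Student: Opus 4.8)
The plan is to prove the two embeddings by purely Fourier-analytic and interpolation arguments on $\mathbb{R}^3$; both are classical, and once the correct splitting is chosen each reduces to an elementary estimate. I would treat (i) and (ii) independently, since they rely on genuinely different mechanisms (direct Fourier bound versus critical Sobolev inequality plus interpolation).

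For part (i), I would start from the Fourier inversion formula $f(x)=(2\pi)^{-3}\int_{\mathbb{R}^3}e^{ix\cdot\xi}\hat f(\xi)\,d\xi$, which gives at once the pointwise bound $|f(x)|\le(2\pi)^{-3}\int_{\mathbb{R}^3}|\hat f(\xi)|\,d\xi$ uniformly in $x$. The key step is to insert the factor $(1+|\xi|^2)^{s/2}(1+|\xi|^2)^{-s/2}$ inside the integral and apply the Cauchy--Schwarz inequality, so that
$$\int_{\mathbb{R}^3}|\hat f(\xi)|\,d\xi\le\Big(\int_{\mathbb{R}^3}(1+|\xi|^2)^{-s}\,d\xi\Big)^{1/2}\Big(\int_{\mathbb{R}^3}(1+|\xi|^2)^{s}|\hat f(\xi)|^2\,d\xi\Big)^{1/2}.$$
The second factor is exactly $\|f\|_{H^s}$ up to a constant. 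The crucial observation is that the first factor is finite precisely when $2s>3$: passing to polar coordinates reduces it to a multiple of $\int_0^\infty(1+r^2)^{-s}r^2\,dr$, which converges if and only if $s>\frac32$. This is where the hypothesis $s>\frac32$ enters, and it yields $\|f\|_{L^\infty}\le C\|f\|_{H^s}$ with $C$ depending only on $s$. This integrability check is the only delicate point in (i).

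For part (ii), I would handle the two endpoints $p=2$ and $p=6$ and then interpolate. The case $p=2$ is immediate, since $\|f\|_{L^2}\le\|f\|_{H^1}$ directly from the definition of the norm. The case $p=6$ is the critical Sobolev embedding in dimension three, where $2^\ast=\frac{2\cdot3}{3-2}=6$; here I would invoke the Gagliardo--Nirenberg--Sobolev inequality $\|f\|_{L^6}\le C\|\nabla f\|_{L^2}\le C\|f\|_{H^1}$. For a general exponent $2<p<6$, I would apply the standard $L^p$-interpolation inequality: choosing $\theta\in[0,1]$ with $\frac1p=\frac\theta2+\frac{1-\theta}{6}$ gives $\|f\|_{L^p}\le\|f\|_{L^2}^{\theta}\|f\|_{L^6}^{1-\theta}$, and combining this with the two endpoint bounds yields $\|f\|_{L^p}\le C\|f\|_{H^1}$ for every $p\in[2,6]$.

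The main obstacle is the endpoint estimate at $p=6$, that is, the critical Sobolev inequality $\|f\|_{L^6}\le C\|\nabla f\|_{L^2}$ on $\mathbb{R}^3$, which is the one genuinely nontrivial ingredient. If one wishes to prove it from scratch rather than cite it, the cleanest route is the Gagliardo--Nirenberg slicing argument, bounding $f$ by integrals of $|\nabla f|$ along each coordinate direction and applying the generalized H\"older inequality, or alternatively the Riesz-potential representation combined with the Hardy--Littlewood--Sobolev inequality. Every remaining step is routine once this endpoint is secured.
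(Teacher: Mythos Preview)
Your proof is correct and entirely standard. Note, however, that the paper does not actually prove this lemma: it is listed in the preliminaries section as a known Sobolev inequality and simply cited from Matsumura--Nishida \cite{MN2}, so there is no ``paper's own proof'' to compare against. Your Fourier-side argument for (i) and endpoint-plus-interpolation argument for (ii) are exactly the textbook proofs one would expect for these classical embeddings.
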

\begin{lemma}\label{2 2.6}(\cite{SS,S})
Let $q>1,2<p<\infty$ and let $\frac{1}{p}+\frac{\alpha}{d}=\frac{1}{q}$. There exists a constant $C>0$ such that if for all $f\in \mathcal{S}'$ is such that $\hat f$ is a function, then
\begin{equation}\label{2.13}
\|f\|_{L^p(\mathbb{R}^d)}\leq C\|\Lambda^\alpha f\|_{L^q(\mathbb{R}^d)}.
\end{equation}
\end{lemma}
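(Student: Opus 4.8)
The plan is to recognize the stated inequality as the fractional Sobolev embedding that follows from the Hardy--Littlewood--Sobolev (HLS) estimate for the Riesz potential, and to reduce it to HLS on the Fourier side. Set $g:=\Lambda^\alpha f$; we may assume $g\in L^q$, otherwise the right-hand side is infinite and there is nothing to prove. Since $\widehat{\Lambda^\alpha f}(\xi)=|\xi|^\alpha\hat f(\xi)$ and $\hat f$ is assumed to be a function, we have $\hat f(\xi)=|\xi|^{-\alpha}\hat g(\xi)$, which is exactly the Fourier symbol of the Riesz potential $I_\alpha g:=\Lambda^{-\alpha}g$. Because $0<\alpha<1\le d$, the operator $I_\alpha$ is convolution against the locally integrable kernel $c_{d,\alpha}|x|^{\alpha-d}$, and the identification $f=I_\alpha g$ holds as tempered distributions. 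Thus it suffices to establish the HLS bound $\|I_\alpha g\|_{L^p}\lesssim\|g\|_{L^q}$ under the scaling relation $\frac1q-\frac1p=\frac\alpha d$. Note the hypotheses fit the admissible HLS range: $\tfrac1p=\tfrac1q-\tfrac\alpha d<\tfrac1q$ forces $q<p$, and $1<q<p<\infty$ with $0<\alpha<d$ is precisely what HLS requires.

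To prove the HLS bound I would use the standard kernel-splitting argument. Write $I_\alpha g(x)=c_{d,\alpha}\int_{\mathbb{R}^d}|x-y|^{\alpha-d}g(y)\,dy$ and split the integration region at a radius $R>0$ into a near part $|x-y|<R$ and a far part $|x-y|\ge R$. For the near part, decompose into dyadic annuli $2^{-k-1}R\le|x-y|<2^{-k}R$; on each annulus $|x-y|^{\alpha-d}\approx(2^{-k}R)^{\alpha-d}$ while $\int_{|x-y|<2^{-k}R}|g|\lesssim (2^{-k}R)^d\,Mg(x)$, where $M$ is the Hardy--Littlewood maximal operator, so summing the resulting geometric series (convergent because $\alpha>0$) gives the near part $\lesssim R^\alpha\,Mg(x)$. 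For the far part, Hölder's inequality yields $\lesssim\|g\|_{L^q}\big(\int_{|z|\ge R}|z|^{(\alpha-d)q'}\,dz\big)^{1/q'}$, and the tail integral converges precisely when $\tfrac1q>\tfrac\alpha d$, which holds since $\tfrac1q=\tfrac1p+\tfrac\alpha d>\tfrac\alpha d$; evaluating it gives the far part $\lesssim R^{\alpha-d/q}\|g\|_{L^q}$.

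Combining the two pieces gives the pointwise estimate $|I_\alpha g(x)|\lesssim R^\alpha Mg(x)+R^{\alpha-d/q}\|g\|_{L^q}$ for every $R>0$. Optimizing by choosing $R=\big(\|g\|_{L^q}/Mg(x)\big)^{q/d}$ to balance the two terms, and using the identity $\tfrac{\alpha q}{d}=q\big(\tfrac1q-\tfrac1p\big)=1-\tfrac qp$ furnished by the scaling relation, yields $|I_\alpha g(x)|\lesssim\|g\|_{L^q}^{1-q/p}\,(Mg(x))^{q/p}$. Taking $L^p$ norms, $\|I_\alpha g\|_{L^p}\lesssim\|g\|_{L^q}^{1-q/p}\,\big\|(Mg)^{q/p}\big\|_{L^p}=\|g\|_{L^q}^{1-q/p}\,\|Mg\|_{L^q}^{q/p}$. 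The main and only nontrivial analytic input is the Hardy--Littlewood maximal inequality $\|Mg\|_{L^q}\lesssim\|g\|_{L^q}$, which is exactly where the hypothesis $q>1$ is essential; this closes the estimate to $\|I_\alpha g\|_{L^p}\lesssim\|g\|_{L^q}=\|\Lambda^\alpha f\|_{L^q}$, completing the proof. The delicate points to watch are the justification of $f=I_\alpha g$ for distributions with $\hat f$ a function (handled on the Fourier side), and the strict inequality $q>1$, since at $q=1$ the maximal inequality degrades to weak type and the conclusion genuinely fails.
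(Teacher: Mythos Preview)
The paper does not supply a proof of this lemma; it is quoted from the literature (Stein, Schonbek--Schonbek) as a known tool. So there is no ``paper's own proof'' to compare against.

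Your argument is the standard Hedberg proof of the Hardy--Littlewood--Sobolev inequality and is correct. One small inaccuracy: you write ``because $0<\alpha<1\le d$'', but the lemma as stated does not restrict $\alpha$ to $(0,1)$; what you actually need and what the hypotheses deliver is $0<\alpha<d$ (from $\tfrac{\alpha}{d}=\tfrac1q-\tfrac1p\in(0,1)$), which is exactly the condition ensuring the Riesz kernel $|x|^{\alpha-d}$ is locally integrable and the far-part tail converges. The rest of the argument --- kernel splitting, dyadic control of the near part by $R^\alpha Mg$, H\"older on the far part, optimizing in $R$, and closing with the $L^q$ boundedness of the maximal function for $q>1$ --- is carried out correctly and with the right bookkeeping on exponents.
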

We shall also use the following Bernstein inequalities for fractional derivatives.
\begin{lemma}\label{2 2.7}(\cite{Jiu})
Let $\alpha\geq0$. Let $1\leq p\leq q\leq\infty$.

(i) If $f$ satisfies
$$supp\hat f\subset\{\xi\in\mathbb{R}^d:|\xi|\leq K2^j\}$$
for some integer $j$ and a constant $K>0$, then
$$\|(-\Delta)^\alpha f\|_{L^q(\mathbb{R}^d)}\leq C_12^{2j\alpha+jd(\frac{1}{p}-\frac{1}{q})}\|f\|_{L^p(\mathbb{R}^d)}.$$
(ii) If $f$ satisfies
$$supp\hat f\subset\{\xi\in\mathbb{R}^d:K_12^j\leq|\xi|\leq K_22^j\}$$
for some integer $j$ and a constant $0<K_1\leq K_2$, then
$$C_12^{2j\alpha}\|f\|_{L^q(\mathbb{R}^d)}\leq\|(-\Delta)^\alpha f\|_{L^q(\mathbb{R}^d)}\leq C_22^{2j\alpha+jd(\frac{1}{p}-\frac{1}{q})}\|f\|_{L^p(\mathbb{R}^d)},$$
where $C_1$ and $C_2$ are constants depending on $\alpha, p$ and $q$ only.
\end{lemma}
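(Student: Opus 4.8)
The plan is to reduce both inequalities to convolution with an explicit kernel and then combine scaling with Young's convolution inequality; the only genuinely analytic input is the $L^r$-integrability of a fixed ``mother kernel,'' and this is exactly where the non-smoothness of $|\xi|^{2\alpha}$ at the origin has to be dealt with. I may assume $\alpha>0$ throughout, since for $\alpha=0$ the operator is the identity and every inequality is trivial.

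For part (i), I would fix a bump $\phi\in C_c^\infty(\mathbb{R}^d)$ with $\phi\equiv1$ on $\{|\xi|\le1\}$ and $\mathrm{supp}\,\phi\subset\{|\xi|\le2\}$, and set $\phi_j(\xi)=\phi(\xi/(K2^j))$, so that $\phi_j\equiv1$ on a neighborhood of $\mathrm{supp}\,\hat f$. Then $\widehat{(-\Delta)^\alpha f}(\xi)=|\xi|^{2\alpha}\hat f(\xi)=\bigl(|\xi|^{2\alpha}\phi_j(\xi)\bigr)\hat f(\xi)$, so that with $G_j:=\mathcal{F}^{-1}\bigl(|\xi|^{2\alpha}\phi_j\bigr)$ one has $(-\Delta)^\alpha f=G_j*f$. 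Young's inequality with $1+\tfrac1q=\tfrac1r+\tfrac1p$ gives $\|(-\Delta)^\alpha f\|_{L^q}\le\|G_j\|_{L^r}\|f\|_{L^p}$. The exponent is then produced by pure scaling: writing $a=K2^j$, one has $|\xi|^{2\alpha}\phi_j(\xi)=a^{2\alpha}m(\xi/a)$ with $m(\eta)=|\eta|^{2\alpha}\phi(\eta)$ independent of $j$, hence $G_j(x)=a^{2\alpha+d}G(ax)$ with $G=\mathcal{F}^{-1}m$, and therefore $\|G_j\|_{L^r}=a^{\,2\alpha+d(1-1/r)}\|G\|_{L^r}=C\,2^{\,2j\alpha+jd(1/p-1/q)}\|G\|_{L^r}$, using $1-1/r=1/p-1/q$. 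This is precisely the claimed factor, so the statement reduces to $\|G\|_{L^r}<\infty$ for every $1\le r\le\infty$.

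Establishing $G\in L^r$ is the main obstacle, and this is where I expect the real work. Since $m$ is bounded with compact support we get $G\in L^\infty$ at once, but decay of $G$ at infinity is delicate because $m=|\xi|^{2\alpha}\phi$ fails to be $C^\infty$ at the origin unless $2\alpha$ is an even integer. I would split $m=m(1-\chi)+m\chi$, with $\chi$ a cutoff supported near $\xi=0$. The piece $m(1-\chi)$ is $C_c^\infty$, so its inverse transform is Schwartz and lies in every $L^r$. For the singular piece $m\chi=|\xi|^{2\alpha}\chi(\xi)$ I would use that $|\xi|^{2\alpha}$ is homogeneous of degree $2\alpha$, so its inverse Fourier transform equals, away from the origin, a constant multiple of the homogeneous function $|x|^{-d-2\alpha}$; convolving with the Schwartz kernel $\mathcal{F}^{-1}\chi$ smooths the behavior near $x=0$ and preserves the $|x|^{-d-2\alpha}$ decay at infinity. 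Since $2\alpha>0$, this decay gives $|x|^{-d-2\alpha}\in L^r(\{|x|>1\})$ for all $r\ge1$, whence $\mathcal{F}^{-1}(m\chi)\in L^r$; adding the two contributions yields $G\in L^r$ and completes (i).

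For part (ii) the upper bound is identical, except the cutoff is taken supported in an annulus $\{K_1/2\le|\xi|\le2K_2\}$ and equal to $1$ on $\{K_1\le|\xi|\le K_2\}\supseteq\mathrm{supp}\,\hat f$; because this multiplier vanishes near the origin it is genuinely $C_c^\infty$, the mother kernel is Schwartz, and the integrability subtlety of (i) disappears entirely. For the lower bound I would invert the operator on the annulus: choosing $\tilde\phi$ with $\tilde\phi_j\equiv1$ on $\mathrm{supp}\,\hat f$, I have $\hat f=|\xi|^{-2\alpha}\tilde\phi_j(\xi)\cdot|\xi|^{2\alpha}\hat f$, so $f=H_j*(-\Delta)^\alpha f$ with $H_j=\mathcal{F}^{-1}\bigl(|\xi|^{-2\alpha}\tilde\phi_j\bigr)$. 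Here $|\xi|^{-2\alpha}\tilde\phi_j$ is smooth and compactly supported away from the origin, so $H_j$ is Schwartz and scaling gives $\|H_j\|_{L^1}=C\,2^{-2j\alpha}$. Young's inequality with $r=1$ then yields $\|f\|_{L^q}\le\|H_j\|_{L^1}\|(-\Delta)^\alpha f\|_{L^q}=C\,2^{-2j\alpha}\|(-\Delta)^\alpha f\|_{L^q}$, which rearranges to the stated lower bound $C_1 2^{2j\alpha}\|f\|_{L^q}\le\|(-\Delta)^\alpha f\|_{L^q}$. Everything apart from the origin singularity in part (i) is scaling bookkeeping and Young's inequality, so that analytic point is the one place I would invest care.
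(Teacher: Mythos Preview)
The paper does not give its own proof of this lemma; it is stated with a citation to \cite{Jiu} and used as a black box. So there is nothing in the paper to compare your argument against.

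Your approach is the standard one and is correct. The reduction to $\|G\|_{L^r}<\infty$ for the mother kernel $G=\mathcal{F}^{-1}(|\eta|^{2\alpha}\phi(\eta))$ via Young and scaling is exactly how Bernstein-type inequalities are proved in the references the paper already cites (e.g.\ \cite{BCD}). Your identification of the only genuine analytic issue---the non-smoothness of $|\eta|^{2\alpha}$ at the origin and the resulting slow decay of $G$---is on point, and the splitting $m=m(1-\chi)+m\chi$ together with the homogeneous-distribution computation for $\mathcal{F}^{-1}(|\xi|^{2\alpha})$ away from the origin is a valid way to extract the $|x|^{-d-2\alpha}$ decay. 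One small remark: the convolution $\mathcal{F}^{-1}(|\xi|^{2\alpha})*\mathcal{F}^{-1}\chi$ is a distribution--Schwartz convolution, so when you assert that it ``preserves the $|x|^{-d-2\alpha}$ decay at infinity'' you are implicitly using that the singular part of $\mathcal{F}^{-1}(|\xi|^{2\alpha})$ at $x=0$ contributes only a Schwartz tail after convolution; this is true but worth one sentence of justification (split the convolution integral at $|y|=|x|/2$). For $2\alpha\in 2\mathbb{Z}_{\ge0}$ the symbol is a polynomial and the kernel is Schwartz outright, so no case is missed. Part (ii) is handled cleanly; the annular support makes both multipliers $C_c^\infty$ and the scaling bookkeeping is correct.
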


\begin{lemma}\label{2 2.8}(\cite{BCD})
Let $\sigma>0$ and $\sigma_1\in\mathbb{R}$. Then we have, for all $u,\ v\in H^\sigma(\mathbb{R}^d)\cap L^\infty(\mathbb{R}^d)$,
$$\|uv\|_{H^\sigma(\mathbb{R}^d)}\lesssim\|u\|_{H^\sigma(\mathbb{R}^d)}\|v\|_{L^\infty(\mathbb{R}^d)}+\|v\|_{H^\sigma(\mathbb{R}^d)}\|u\|_{L^\infty(\mathbb{R}^d)},$$
$$\|uv\|_{\dot H^\sigma(\mathbb{R}^d)}\lesssim\|u\|_{\dot H^\sigma(\mathbb{R}^d)}\|v\|_{L^\infty(\mathbb{R}^d)}+\|v\|_{\dot H^\sigma(\mathbb{R}^d)}\|u\|_{L^\infty(\mathbb{R}^d)}.$$
Moreover, if $d\geq 2$, then we have, for $u\in H^\sigma(\mathbb{R}^d)\cap H^{\frac{d}{2}-1}(\mathbb{R}^d)$, $v\in H^{\sigma+1}(\mathbb{R}^d)\cap L^\infty(\mathbb{R})$,
$$\|uv\|_{\dot H^\sigma(\mathbb{R}^d)}\lesssim\|u\|_{\dot H^\sigma(\mathbb{R}^d)}\|v\|_{L^\infty(\mathbb{R}^d)}+\|v\|_{\dot H^{\sigma+1}(\mathbb{R}^d)}\|u\|_{H^{\frac{d}{2}-1}(\mathbb{R}^d)}.$$
If $\sigma>\frac{d}{2}$, then $H^\sigma(\mathbb{R})$ embeds into $L^\infty(\mathbb{R}^d)$. Also, for all $u,\ v\in H^\sigma(\mathbb{R}^d)$, it holds that
$$\|uv\|_{H^\sigma(\mathbb{R}^d)}\lesssim\|u\|_{H^\sigma(\mathbb{R}^d)}\|v\|_{H^\sigma(\mathbb{R}^d)}.$$
Otherwise, if $\sigma_1\leq\frac{d}{2}<\sigma$ and $\sigma_1+\sigma>0$, then, for all $u\in H^\sigma(\mathbb{R}^d)$, $v\in H^{\sigma_1}(\mathbb{R}^d)$, it holds that
$$\|uv\|_{H^{\sigma_1}(\mathbb{R}^d)}\lesssim\|u\|_{H^\sigma(\mathbb{R}^d)}\|v\|_{H^{\sigma_1}(\mathbb{R}^d)}.$$
\end{lemma}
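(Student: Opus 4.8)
\emph{Proof proposal.} The unifying tool will be Bony's paraproduct decomposition together with the Littlewood--Paley characterizations $\|f\|_{H^\sigma}^2\approx\sum_j 2^{2j\sigma}\|\Delta_j f\|^2$ and its homogeneous analogue for $\dot H^\sigma$. Writing $S_j=\sum_{k\le j}\Delta_k$ and
$$uv=T_uv+T_vu+R(u,v),\quad T_uv=\sum_j S_{j-1}u\,\Delta_j v,\quad R(u,v)=\sum_{|j-k|\le1}\Delta_j u\,\Delta_k v,$$
reduces every one of the five inequalities to two building blocks, so the plan is to record these blocks and then assemble the claims. Building block one (paraproduct): since $\Delta_j(T_uv)$ is spectrally supported in an annulus of size $2^j$ and $\|\Delta_j(T_uv)\|\lesssim\sum_{|k-j|\le4}\|S_{k-1}u\|_{L^\infty}\|\Delta_k v\|\lesssim\|u\|_{L^\infty}\sum_{|k-j|\le4}\|\Delta_k v\|$, weighting by $2^{j\sigma}$ and applying Young's inequality to the finite-width sum gives $\|T_uv\|_{\dot H^\sigma}\lesssim\|u\|_{L^\infty}\|v\|_{\dot H^\sigma}$ for every $\sigma\in\mathbb{R}$. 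Building block two (remainder): as $\Delta_j R(u,v)$ only receives contributions from $k\ge j-N$, one has $2^{j\sigma}\|\Delta_j R(u,v)\|\lesssim\|u\|_{L^\infty}\sum_{k\ge j-N}2^{(j-k)\sigma}\mathbf{1}_{k\ge j-N}\,2^{k\sigma}\|\Delta_k v\|$, and for $\sigma>0$ the kernel in $j-k$ is $\ell^1$, so Young again yields $\|R(u,v)\|_{\dot H^\sigma}\lesssim\|u\|_{L^\infty}\|v\|_{\dot H^\sigma}$. Bernstein's inequalities (Lemma \ref{2 2.7}) are precisely what convert the frequency localizations into these $L^\infty$--$L^2$ bounds.

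With these in hand, the homogeneous estimate of the second line is immediate by symmetry (bound $T_uv,R(u,v)$ by $\|u\|_{L^\infty}\|v\|_{\dot H^\sigma}$ and $T_vu$ by $\|v\|_{L^\infty}\|u\|_{\dot H^\sigma}$); the inhomogeneous first line then follows by adding the trivial bound $\|uv\|\le\|u\|_{L^\infty}\|v\|$ and using $\|\cdot\|_{H^\sigma}\approx\|\cdot\|+\|\cdot\|_{\dot H^\sigma}$ for $\sigma>0$. Equivalently, both lines are just the Kato--Ponce inequality \eqref{2.3} with $(p,p_1,p_2,p_3,p_4)=(2,\infty,2,2,\infty)$ supplemented by the $L^2$ bound. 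The algebra estimate of the fourth line is then the first line combined with the embedding $H^\sigma\hookrightarrow L^\infty$ for $\sigma>\frac d2$ (Lemma \ref{2 2.3}).

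For the refined estimate of the third line I would keep $\|T_vu\|_{\dot H^\sigma}\lesssim\|v\|_{L^\infty}\|u\|_{\dot H^\sigma}$ from block one, and shift one derivative onto $v$ in the remaining pieces. Bernstein gives $H^{\frac d2-1}\hookrightarrow\dot B^{-1}_{\infty,\infty}$ (indeed $2^{-k}\|\Delta_k u\|_{L^\infty}\lesssim 2^{k(\frac d2-1)}\|\Delta_k u\|\le\|u\|_{\dot H^{\frac d2-1}}$), so rerunning blocks one and two with $u$ in this negative-regularity space produces $\|T_uv\|_{\dot H^\sigma}+\|R(u,v)\|_{\dot H^\sigma}\lesssim\|u\|_{H^{\frac d2-1}}\|v\|_{\dot H^{\sigma+1}}$, the remainder needing only $\sigma>0$, which holds by hypothesis. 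Summing the three pieces gives the asserted asymmetric bound.

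The fifth line is the only genuinely delicate assembly. With $\sigma>\frac d2$ one has $H^\sigma\hookrightarrow L^\infty$, hence $\|T_uv\|_{H^{\sigma_1}}\lesssim\|u\|_{L^\infty}\|v\|_{H^{\sigma_1}}\lesssim\|u\|_{H^\sigma}\|v\|_{H^{\sigma_1}}$ from block one. For $T_vu$ and $R(u,v)$, where $v$ carries the low regularity $\sigma_1\le\frac d2$, I would bound $\|S_{k-1}v\|_{L^\infty}$ and $\|\Delta_k v\|_{L^\infty}$ by Bernstein as $\lesssim 2^{k(\frac d2-\sigma_1)}$ times an $\ell^2$ sequence controlled by $\|v\|_{H^{\sigma_1}}$, and then exploit the two spectral gaps $\sigma-\frac d2>0$ and $\sigma_1+\sigma>0$ to sum the resulting geometric series in $j$; the condition $\sigma_1+\sigma>0$ is exactly what makes the remainder sum converge, mirroring the role of $\sigma>0$ in block two. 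I expect the main obstacle to be the endpoint $\sigma_1=\frac d2$, where in homogeneous scaling the low-frequency summation would diverge logarithmically; this is resolved by working with the inhomogeneous decomposition, whose lowest block $\Delta_{-1}$ truncates the frequencies from below and keeps every partial sum $\sum_{k'\le k-2}$ finite. Careful bookkeeping of these index constraints, rather than any single hard inequality, is where the real work lies.
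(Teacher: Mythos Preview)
The paper does not prove this lemma; it is quoted from \cite{BCD} without argument. Your paraproduct approach is precisely the method used in that reference (Bahouri--Chemin--Danchin), and your outline is correct in its essentials: the first two and fourth inequalities are the standard tame product estimates, the third is the asymmetric variant obtained by trading one derivative from $u$ to $v$ via the embedding $H^{\frac d2-1}\hookrightarrow\dot B^{-1}_{\infty,\infty}$, and the fifth is the product law in $H^{\sigma_1}$ for $\sigma_1\le\frac d2<\sigma$ with the convergence condition $\sigma_1+\sigma>0$ governing the remainder.
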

\begin{lemma}\label{2 2.9}(\cite{BCD})
Let $\sigma>0$ and let $f$ be a smooth function such that $f(0)=0$. If $u\in H^\sigma(\mathbb{R}^d)$, then there exists a function $C=C(\sigma,f,d)$ such that
\begin{equation*}
\begin{array}{ll}
\|f(u)\|_{H^\sigma(\mathbb{R}^d)}\leq C(\|u\|_{L^\infty(\mathbb{R}^d)})\|u\|_{H^\sigma(\mathbb{R}^d)},\\[2mm]
\|f(u)\|_{\dot{H}^\sigma(\mathbb{R}^d)}\leq C(\|u\|_{L^\infty(\mathbb{R}^d)})\|u\|_{\dot{H}^\sigma(\mathbb{R}^d)}.
\end{array}
\end{equation*}
\end{lemma}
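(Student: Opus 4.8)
The plan is to prove the homogeneous bound first and then recover the nonhomogeneous one, the whole argument resting on a Littlewood--Paley decomposition together with a first order Taylor expansion. Writing $S_q=\sum_{q'<q}\Delta_{q'}$ for the low frequency cut-off and using $f(0)=0$ together with $S_qu\to0$ as $q\to-\infty$ and $S_qu\to u$ as $q\to+\infty$, I would telescope
\begin{equation*}
f(u)=\sum_{q}\big(f(S_{q+1}u)-f(S_qu)\big)=\sum_{q}m_q\,\Delta_qu,\qquad m_q:=\int_0^1 f'\big(S_qu+t\,\Delta_qu\big)\,dt .
\end{equation*}
Since convolution against the (fixed $L^1$) kernels defining $S_q$ and $\Delta_q$ is bounded on $L^\infty$, the argument $S_qu+t\Delta_qu$ stays in the ball $\{|v|\le C\|u\|_{L^\infty}\}$ for every $q$ and $t\in[0,1]$, whence $\|m_q\|_{L^\infty}\le \sup_{|v|\le C\|u\|_{L^\infty}}|f'(v)|=:C_0(\|u\|_{L^\infty})$ because $f$ is smooth. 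This isolates the single bad factor $m_q$, bounded in $L^\infty$ but with no spectral localization, multiplying the spectrally localized block $\Delta_qu$.

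Next I would estimate $\|f(u)\|_{\dot H^\sigma}^2\approx\sum_j 2^{2j\sigma}\|\Delta_j f(u)\|_{L^2}^2$ (using the homogeneous decomposition for the homogeneous norm) by inserting the representation above and splitting the inner sum over $q$ into the two regimes $q\ge j$ and $q<j$. In the low--high regime $q\ge j$ one simply discards $\Delta_j$ and $m_q$: using $\|\Delta_j(m_q\Delta_qu)\|_{L^2}\le C_0\|\Delta_qu\|_{L^2}\le C_0\,2^{-q\sigma}c_q\|u\|_{\dot H^\sigma}$ with $\{c_q\}\in\ell^2$ of unit norm, the factor $\sum_{q\ge j}2^{(j-q)\sigma}c_q$ is a discrete convolution of an $\ell^2$ sequence with the summable kernel $2^{-k\sigma}\mathbf 1_{k\ge0}$ (here $\sigma>0$ is essential), hence $\ell^2$ in $j$ by Young's inequality. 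The high--low regime $q<j$ is the technical heart: here $\Delta_j(m_q\Delta_qu)$ need not vanish, precisely because composition with the nonlinear $f'$ destroys the frequency support of $m_q$. I would exploit that $S_qu+t\Delta_qu$ is band limited at frequency $\sim 2^q$, so by the chain rule and the Bernstein inequalities of Lemma \ref{2 2.7} the high frequencies of $m_q=\int_0^1 f'(\cdots)\,dt$ decay rapidly: $\|\Delta_{q'}m_q\|_{L^2}\lesssim C_0\,2^{-(q'-q)N}$ for any prescribed $N$ (with constants depending on $f$ and $\|u\|_{L^\infty}$). Since a frequency $2^j$ with $j>q$ can only arise in the product $m_q\Delta_qu$ from the component $\Delta_{q'}m_q$ with $q'\sim j$, this yields $\|\Delta_j(m_q\Delta_qu)\|_{L^2}\lesssim C_0\,2^{-(j-q)N}\|\Delta_qu\|_{L^\infty}$, and choosing $N>\sigma$ makes the resulting double sum converge again by discrete Young's inequality.

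Summing the two regimes gives $\|f(u)\|_{\dot H^\sigma}\le C(\|u\|_{L^\infty})\|u\|_{\dot H^\sigma}$, which is the second inequality. For the nonhomogeneous statement I would add the low frequency ($L^2$) part: since $f(0)=0$, the mean value theorem gives $|f(u)|\le C_0(\|u\|_{L^\infty})|u|$ pointwise, hence $\|f(u)\|_{L^2}\le C_0\|u\|_{L^2}$, and combining this with the homogeneous bound through $\|\cdot\|_{H^\sigma}^2\approx\|\cdot\|_{L^2}^2+\|\cdot\|_{\dot H^\sigma}^2$ yields the first inequality. I expect the main obstacle to be exactly the high--low regime, i.e. quantifying the loss of spectral localization under the nonlinearity; everything else is a routine convolution estimate, and the hypotheses are used only through $f(0)=0$ (for the telescoping and the $L^2$ bound) and the smoothness of $f$ (for the uniform control of $f'$ and the rapid decay of $\|\Delta_{q'}m_q\|_{L^2}$).
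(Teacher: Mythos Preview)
The paper does not prove this lemma; it simply cites it from Bahouri--Chemin--Danchin~\cite{BCD}, so there is no ``paper's own proof'' to compare against. Your outline is essentially the standard argument from that reference (Meyer's first linearization, Theorem~2.61 there): the telescoping representation $f(u)=\sum_q m_q\,\Delta_q u$, the uniform $L^\infty$ control of $m_q$, and the low--high/high--low split are exactly the ingredients used in \cite{BCD}. So the approach is correct and coincides with the cited source.

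Two small slips in your high--low paragraph are worth fixing. First, $m_q$ is only an $L^\infty$ function (it need not lie in $L^2$), so the decay estimate should read $\|\Delta_{q'}m_q\|_{L^\infty}\lesssim C(\|u\|_{L^\infty})\,2^{-(q'-q)N}$, obtained via Bernstein and the Fa\`a di Bruno/chain rule applied to $f'$ composed with a function band-limited to frequencies $\lesssim 2^q$. Second, the ensuing product bound is more naturally
\[
\|\Delta_j(m_q\,\Delta_q u)\|_{L^2}\lesssim \sum_{q'\ge j-2}\|\Delta_{q'}m_q\|_{L^\infty}\,\|\Delta_q u\|_{L^2}\lesssim C\,2^{-(j-q)N}\|\Delta_q u\|_{L^2},
\]
with $\|\Delta_q u\|_{L^2}$ rather than $\|\Delta_q u\|_{L^\infty}$; this feeds directly into the $\dot H^\sigma$ norm without an extra Bernstein loss. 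With these corrections the discrete Young argument closes exactly as you describe, and the $H^\sigma$ statement follows from the $\dot H^\sigma$ one together with the pointwise bound $|f(u)|\le C_0(\|u\|_{L^\infty})|u|$.
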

\begin{lemma}\label{2 2.5}(\cite{BCD})
Let $\sigma>\frac{d}{2}$ and let $f$ be a smooth function such that $f'(0)=0$. If $u,v\in H^\sigma(\mathbb{R}^d)$, then there exists a function $C=C(\sigma,f,d)$ such that
\begin{equation*}
\|f(u)-f(v)\|_{H^\sigma(\mathbb{R}^d)}\leq C(\|u\|_{L^\infty(\mathbb{R}^d)},\|v\|_{L^\infty(\mathbb{R}^)})\|u-v\|_{H^\sigma(\mathbb{R}^d)}(\|u\|_{H^\sigma(\mathbb{R}^d)}+\|v\|_{H^\sigma(\mathbb{R}^d)}).
\end{equation*}
\end{lemma}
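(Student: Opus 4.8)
The plan is to reduce the composition difference to a product and then invoke the algebra/product estimate of Lemma \ref{2 2.8} together with the composition estimate of Lemma \ref{2 2.9}. First I would write, via the fundamental theorem of calculus,
\[
f(u)-f(v)=\Big(\int_0^1 f'\big(v+t(u-v)\big)\,dt\Big)(u-v)=:G\,w,\qquad w:=u-v,
\]
so that the task becomes estimating the $H^\sigma$-norm of the product $Gw$. The hypothesis $f'(0)=0$ will be used decisively: by Hadamard's lemma there is a smooth $\psi$ with $f'(z)=z\,\psi(z)$, so that $G=\int_0^1 (v+tw)\,\psi(v+tw)\,dt$ carries one extra factor vanishing at the origin.

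Since $\sigma>\frac d2$, $H^\sigma$ is an algebra and embeds into $L^\infty$; applying the product inequality from Lemma \ref{2 2.8} gives
\[
\|Gw\|_{H^\sigma}\lesssim \|G\|_{L^\infty}\|w\|_{H^\sigma}+\|G\|_{H^\sigma}\|w\|_{L^\infty}.
\]
I would estimate the two pieces separately. Set $M:=\max(\|u\|_{L^\infty},\|v\|_{L^\infty})$; since $v+tw=(1-t)v+tu$ is a convex combination, one has $\|v+tw\|_{L^\infty}\le M$ for all $t\in[0,1]$, which will provide the uniform-in-$t$ control needed below.

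For the first piece, the factorization $f'(z)=z\,\psi(z)$ yields $\|G\|_{L^\infty}\le M\sup_{|\zeta|\le M}|\psi(\zeta)|=:C(M)\,M$, and by Sobolev embedding $M\le\|u\|_{L^\infty}+\|v\|_{L^\infty}\lesssim\|u\|_{H^\sigma}+\|v\|_{H^\sigma}$; hence $\|G\|_{L^\infty}\|w\|_{H^\sigma}\lesssim C(M)\,(\|u\|_{H^\sigma}+\|v\|_{H^\sigma})\,\|u-v\|_{H^\sigma}$, which is exactly the desired form. For the second piece, Minkowski's integral inequality and Lemma \ref{2 2.9} applied to $f'$ (legitimate precisely because $f'(0)=0$) give
\[
\|G\|_{H^\sigma}\le\int_0^1\|f'(v+tw)\|_{H^\sigma}\,dt\le C(M)\int_0^1\|v+tw\|_{H^\sigma}\,dt\lesssim C(M)\,(\|u\|_{H^\sigma}+\|v\|_{H^\sigma}),
\]
where the uniform bound $\|v+tw\|_{L^\infty}\le M$ keeps the composition constant independent of $t$. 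Combining with $\|w\|_{L^\infty}\lesssim\|w\|_{H^\sigma}=\|u-v\|_{H^\sigma}$ closes this piece, and adding the two proves the lemma.

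The main obstacle is structural rather than technical: one must recover the full trilinear bound $\|u-v\|_{H^\sigma}(\|u\|_{H^\sigma}+\|v\|_{H^\sigma})$ and not merely a Lipschitz estimate $\lesssim\|u-v\|_{H^\sigma}$. This is exactly where $f'(0)=0$ enters, forcing $G$ to vanish to first order and thereby supplying the extra factor $(\|u\|_{H^\sigma}+\|v\|_{H^\sigma})$; the counterexample $f(z)=z$, for which the claimed inequality fails for small data, confirms that this hypothesis cannot be dropped. The only remaining care is the uniformity in $t$ of the composition constant, which is handled by the convexity bound $\|v+tw\|_{L^\infty}\le M$.
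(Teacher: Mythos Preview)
The paper does not supply a proof of this lemma; it is stated as a quotation from \cite{BCD} and used as a black box. Your argument is correct and is essentially the standard route one finds in that reference: write $f(u)-f(v)=G\,(u-v)$ via the fundamental theorem of calculus, then control the product by the tame estimate of Lemma~\ref{2 2.8}, and handle $\|G\|_{H^\sigma}$ by applying the composition estimate of Lemma~\ref{2 2.9} to $f'$, which is legitimate precisely because $f'(0)=0$. The convexity bound $\|v+t(u-v)\|_{L^\infty}\le M$ is exactly what keeps the composition constant uniform in $t$, and your remark that the linear case $f(z)=z$ shows the hypothesis $f'(0)=0$ is sharp is on point. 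There is no gap.
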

\begin{lemma}\label{2 2.10}(\cite{BCD})
Let $\sigma>\frac{d}{2}-1$. There exists a positive sequence $\|c_j\|_{l^2}=1$ satisfying $\{c_j\}_{j\geq-1}$ such that
\begin{equation*}
\|[u\cdot\nabla,\Delta_j]f\|_{L^2(\mathbb{R}^d)}\leq Cc_j2^{-j(\sigma+1)}\|\nabla u\|_{H^{\sigma+1}(\mathbb{R}^d)}\|f\|_{H^{\sigma+1}(\mathbb{R}^d)}.
\end{equation*}
\end{lemma}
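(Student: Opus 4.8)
The plan is to prove this standard paraproduct commutator estimate (cf.\ \cite{BCD}) via Bony's decomposition together with kernel and Bernstein estimates. Write $u\cdot\nabla f=\sum_{k=1}^{d}u^{k}\partial_{k}f$ and decompose each product as
\[
u^{k}\partial_{k}f=T_{u^{k}}\partial_{k}f+T_{\partial_{k}f}u^{k}+R(u^{k},\partial_{k}f),
\]
where $T_{a}b=\sum_{j'}S_{j'-1}a\,\Delta_{j'}b$, $R(a,b)=\sum_{|j'-j''|\le 1}\Delta_{j'}a\,\Delta_{j''}b$ and $S_{j'-1}=\sum_{k\le j'-2}\Delta_{k}$. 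Applying $\Delta_{j}$, using that $\Delta_{j}$ commutes with $\partial_{k}$, and subtracting from $u\cdot\nabla\Delta_{j}f$ (decomposed the same way) gives, with summation over $k$ understood,
\begin{align*}
[u\cdot\nabla,\Delta_{j}]f&=[T_{u^{k}},\Delta_{j}]\partial_{k}f+T_{\partial_{k}\Delta_{j}f}u^{k}+R(u^{k},\partial_{k}\Delta_{j}f)\\
&\quad-\Delta_{j}T_{\partial_{k}f}u^{k}-\Delta_{j}R(u^{k},\partial_{k}f)=:R^{1}_{j}+R^{2}_{j}+R^{3}_{j}+R^{4}_{j}+R^{5}_{j}.
\end{align*}
I would estimate the five pieces in turn.

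For $R^{1}_{j}=[T_{u^{k}},\Delta_{j}]\partial_{k}f=\sum_{|j'-j|\le N_{0}}[S_{j'-1}u^{k},\Delta_{j}]\Delta_{j'}\partial_{k}f$ (the frequency supports forcing $|j'-j|\le N_{0}$ for a universal $N_{0}$): realising $\Delta_{j}$ as convolution against $2^{jd}h(2^{j}\cdot)$ for a fixed Schwartz function $h$, one has $[a,\Delta_{j}]g(x)=\int 2^{jd}h(2^{j}(x-y))\bigl(a(y)-a(x)\bigr)g(y)\,dy$, so the mean value theorem yields $\|[a,\Delta_{j}]g\|_{L^{2}}\lesssim 2^{-j}\|\nabla a\|_{L^{\infty}}\|g\|_{L^{2}}$. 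Taking $a=S_{j'-1}u^{k}$ (so $\|\nabla a\|_{L^{\infty}}\lesssim\|\nabla u\|_{L^{\infty}}$), using $\|\Delta_{j'}\partial_{k}f\|_{L^{2}}\approx 2^{j'}\|\Delta_{j'}f\|_{L^{2}}$ and $2^{j'}\approx 2^{j}$, and summing the finitely many terms gives $\|R^{1}_{j}\|_{L^{2}}\lesssim\|\nabla u\|_{L^{\infty}}\,c_{j}2^{-j(\sigma+1)}\|f\|_{H^{\sigma+1}}$ for an $\ell^{2}$-sequence $c_{j}$.

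For the other four pieces the spectral supports confine $j'$ to a half-line ($j'\ge j-N_{0}$, or $j'\ge j+1$ for $R^{2}_{j}$) rather than a window, so I would replace the kernel argument by Bernstein's inequality, Hölder with an $L^{\infty}$-factor, and the embedding $H^{\sigma+1}(\mathbb{R}^{d})\hookrightarrow L^{\infty}(\mathbb{R}^{d})$, which holds precisely because $\sigma>\frac{d}{2}-1$. Concretely: in $R^{3}_{j}$ and $R^{5}_{j}$ bound $\|\Delta_{j'}u^{k}\|_{L^{\infty}}\lesssim 2^{-j'}\|\nabla u\|_{L^{\infty}}$ and pair with $\|\Delta_{j''}\partial_{k}f\|_{L^{2}}\approx 2^{j''}\|\Delta_{j''}f\|_{L^{2}}$; since $\sigma+1>0$ the resulting sum $\sum_{j'\ge j-N_{0}}\|\Delta_{j'}f\|_{L^{2}}$ is a convolution of an $\ell^{1}$-sequence with $\bigl(2^{j'(\sigma+1)}\|\Delta_{j'}f\|_{L^{2}}\bigr)_{j'}\in\ell^{2}$, which is of the form $c_{j}2^{-j(\sigma+1)}\|f\|_{H^{\sigma+1}}$. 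In $R^{4}_{j}$ use $\|S_{j'-1}\partial_{k}f\|_{L^{\infty}}\lesssim 2^{j'}\|f\|_{L^{\infty}}\lesssim 2^{j'}\|f\|_{H^{\sigma+1}}$, pair with $\|\Delta_{j'}u^{k}\|_{L^{2}}\lesssim 2^{-j'}\|\Delta_{j'}\nabla u\|_{L^{2}}$, and sum over $|j'-j|\le N_{0}$. In $R^{2}_{j}$ use $\|S_{j'-1}\partial_{k}\Delta_{j}f\|_{L^{\infty}}\lesssim 2^{j(1+d/2)}\|\Delta_{j}f\|_{L^{2}}$ and $\sum_{j'\ge j+1}\|\Delta_{j'}u\|_{L^{2}}\lesssim 2^{-j(\sigma+2)}c_{j}\|\nabla u\|_{H^{\sigma+1}}$ (valid as $\|u\|_{\dot H^{\sigma+2}}=\|\nabla u\|_{\dot H^{\sigma+1}}\le\|\nabla u\|_{H^{\sigma+1}}$); the surplus $2^{j(d/2-\sigma-1)}$ is bounded for $j\ge-1$ exactly because $\sigma+1>d/2$, leaving again the desired form. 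Summing the five bounds and, after checking the resulting coefficient sequence lies in $\ell^{2}$, normalising it to have $\ell^{2}$-norm $1$ (absorbing the constant into $C$) yields the claim.

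I expect the main difficulty to be bookkeeping rather than conceptual: keeping track of which Littlewood--Paley interactions survive, and — in $R^{2}_{j}$, $R^{4}_{j}$, $R^{5}_{j}$ — extracting an honest $\ell^{2}$ sequence $(c_{j})$ from the one-sided sums over $j'$ (here one repeatedly uses that a Young-type convolution of an $\ell^{1}$ weight with an $\ell^{2}$ profile stays in $\ell^{2}$, and that an $\ell^{2}$ sequence is bounded) while verifying that every redistribution of a derivative or invocation of an $L^{\infty}$-norm is covered by the single hypothesis $\sigma>\frac{d}{2}-1$. No cancellation beyond the obvious commutator structure is needed.
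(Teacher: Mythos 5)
Your overall strategy (Bony decomposition, the kernel estimate for the genuine commutator piece $R^1_j$, Bernstein plus $H^{\sigma+1}\hookrightarrow L^\infty$ for the rest) is the standard route — the paper itself offers no proof, only the citation \cite{BCD} — and your treatment of $R^1_j$, $R^2_j$, $R^4_j$ is essentially sound. The genuine gap is in $R^3_j$ and $R^5_j$: there you bound every block of $u$ by $\|\Delta_{j'}u\|_{L^\infty}\lesssim 2^{-j'}\|\nabla u\|_{L^\infty}$, but this inequality is only valid for $j'\ge 0$ (annulus support); it is false for the low block $j'=-1$, which does occur in the remainder $R(u^k,\cdot)$ (for $R^3_j$ when $j\lesssim 1$, and for $R^5_j$ when $j\le N_0$). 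This is not a removable technicality in the generality of the lemma: the right-hand side contains only $\|\nabla u\|_{H^{\sigma+1}}$, and in $d=2$ no norm of $\Delta_{-1}u$ is controlled by it. Concretely, take $\hat u_\epsilon(\xi)=|\xi|^{-2}\bigl(\log\tfrac1{|\xi|}\bigr)^{-1}\mathbf{1}_{\{\epsilon\le|\xi|\le 1/2\}}$ (times a fixed direction): then $\|\nabla u_\epsilon\|_{H^{\sigma+1}}$ stays bounded as $\epsilon\to0$, while $\|\Delta_{-1}u_\epsilon\|_{L^\infty}\ge |u_\epsilon(0)|\approx \log\log\tfrac1\epsilon\to\infty$, and since $u_\epsilon$ varies on unit scale, the single term $\Delta_{-1}u_\epsilon^k\,\Delta_{j''}\partial_k\Delta_j f$ (for a fixed suitable $f$ and small $j$) is not bounded by $\|\nabla u_\epsilon\|_{H^{\sigma+1}}\|f\|_{H^{\sigma+1}}$. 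So the individual Bony pieces you estimate are, for these low blocks, genuinely larger than the commutator itself; some cancellation beyond "the obvious commutator structure" of $R^1_j$ is in fact needed.

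The repair is standard and cheap, but it must be stated: keep the low frequencies of $u$ inside a commutator. Either split $u=\Delta_{-1}u+(\mathrm{Id}-\Delta_{-1})u$ at the outset and estimate $[\Delta_{-1}u\cdot\nabla,\Delta_j]f$ directly by the same kernel lemma you use for $R^1_j$ (noting that, by support considerations, only finitely many low blocks of $f$ and only $j\le N_0$ contribute, so the factor $c_j2^{-j(\sigma+1)}$ is harmless), applying Bony only to the remaining part of $u$; or, equivalently, observe that the $j'=-1$ contributions of $R^3_j$ and $R^5_j$ recombine exactly as $\sum_{j''\le 0}[\Delta_{-1}u^k,\Delta_j]\Delta_{j''}\partial_k f$, to which the kernel estimate applies with $\|\nabla u\|_{L^\infty}$. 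In $d\ge3$ one could instead salvage your termwise bound via $\|\Delta_{-1}u\|_{L^\infty}\le\||\xi|^{-1}\|_{L^2(|\xi|\le 2)}\|\nabla u\|_{L^2}$, but in $d=2$ — which the paper's Theorem 1.1 requires — only the commutator recombination works, so your proof as written does not close there.
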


\section{Priori Estimate}

In this section, we shall derive the priori estimate. Since the two dimensional case is almost the same as the three dimensional case, we only provide the proof of the three dimensional case in this section (we also omit $\mathbb{R}^3$ for simplicity).

In order to simplify the notation, we define the functional set $(\rho, \mathbf{u})\in E(T)$ for
$$(\rho,\mathbf{u})\in C([0,T];H^{s+1}\times H^{s+1}),\ \ (\nabla\rho,\Lambda^\alpha \mathbf{u})\in L^2([0,T];H^s\times H^{s+1}).$$
The corresponding norms are defined as
\begin{equation*}
\begin{array}{ll}
\|(\rho,\mathbf{u})\|_{E(0)}=\|\rho_0\|_{H^{s+1}}^2+\|\mathbf{u}_0\|_{H^{s+1}}^2,\\[2mm]
\|(\rho,\mathbf{u})\|_{E(T)}=\|\rho\|_{L_T^\infty(H^{s+1})}^2+\|\mathbf{u}\|_{L_T^\infty(H^{s+1})}^2+\|\nabla\rho\|_{L_T^2(H^{s})}^2+\|\Lambda^\alpha \mathbf{u}\|_{L_T^2(H^{s+1})}^2.
\end{array}
\end{equation*}
\begin{proposition}\label{3 3.1}
Let $s>\frac{3}{2}$ and let $T>0$. Let $(\rho, \mathbf{u})\in E(T)$ be the solution of Cauchy problem (\ref{2.1}) with initial data $(\rho_0,\mathbf{u}_0)$. Suppose that $\|\rho(t,\cdot)\|_{L^\infty}\leq\frac{1}{2}$. Then we have the following inequality:
\begin{equation*}
\|(\rho,\mathbf{u})\|_{E(T)}\lesssim\|(\rho,\mathbf{u})\|_{E(0)}+\|(\rho,\mathbf{u})\|_{E(T)}^2.
\end{equation*}
\end{proposition}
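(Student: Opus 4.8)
The plan is to derive the a priori estimate (\ref{1.12}) by an energy method built on the Littlewood--Paley decomposition, splitting the unknowns into low and high frequencies and treating the two regimes with different tools. First I would work with the reformulated system (\ref{2.1}), which has the symmetric-hyperbolic structure in $(\rho,\mathbf{u})$ plus the fractional dissipation $\Lambda^{2\alpha}\mathbf u$ (with a variable coefficient $\mu'/(\kappa+\frac{1}{a}\rho)^a$). The natural energy is $\frac12\|\rho\|_{H^{s+1}}^2+\frac12\|\mathbf u\|_{H^{s+1}}^2$, and the key observation is that applying $\Lambda^{s+1}$ (equivalently $\langle D\rangle^{s+1}$, or summing over $\Delta_j$ with weights $2^{j(s+1)}$) to both equations and pairing with $\Lambda^{s+1}\rho$, $\Lambda^{s+1}\mathbf u$, the top-order symmetric terms $(\kappa+\frac1a\rho)\nabla\cdot\mathbf u$ and $(\kappa+\frac1a\rho)\nabla\rho$ cancel \emph{up to commutators} after integration by parts — exactly the cancellation that makes the hyperbolic part lossless. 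This is where the commutators advertised in the introduction, e.g.\ (\ref{3.2}), (\ref{3.5}), (\ref{3.9})--(\ref{3.12}), enter: one must control $[\Lambda^{s+1},(\kappa+\frac1a\rho)]\nabla\cdot\mathbf u$ and similar terms, and, for the dissipative term, the commutator between $\Lambda^{s+1}$ (or $\Delta_j$) and the variable coefficient $\mu'(\kappa+\frac1a\rho)^{-a}$ against $\Lambda^{2\alpha}\mathbf u$. Lemma~\ref{2 2.1} (estimates (\ref{2.3})--(\ref{2.5})), Lemma~\ref{2 2.2}, Lemma~\ref{2 2.8}, Lemma~\ref{2 2.9} and the commutator Lemma~\ref{2 2.10} are precisely the toolkit for bounding all these terms by $\|(\rho,\mathbf u)\|_{H^{s+1}}$ times a dissipative quantity, i.e.\ giving the quadratic-in-$E(T)$ right-hand side once the smallness $\|\rho\|_{L^\infty}\le\frac12$ (hence $\kappa+\frac1a\rho$ bounded above and below) is used.

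Next I would address the obvious difficulty: the dissipation only controls $\|\Lambda^\alpha\mathbf u\|_{H^{s+1}}$, not $\|\nabla\rho\|_{H^s}$, so the density has no direct smoothing. The standard remedy — and what the statement of (\ref{1.12}) forces — is a Hoff-type / effective-flux argument: test the momentum equation (at level $\langle D\rangle^{s}\nabla$, i.e.\ one derivative below top order for $\mathbf u$) against $\Lambda^s\nabla\rho$ and the mass equation against the same, so that the cross term $\langle(\kappa+\frac1a\rho)\nabla\rho,\nabla\rho\rangle_{H^s}$ produces $\|\nabla\rho\|_{H^s}^2$ with a good sign, while $\partial_t\langle\mathbf u,\nabla\rho\rangle_{H^s}$ is an exact derivative of a quantity dominated by the energy, and the fractional viscous term contributes $\langle\Lambda^{2\alpha}\mathbf u,\nabla\rho\rangle_{H^s}$ which — and here is the point where $\alpha<1$ and the regularity $s+1$ rather than $3\alpha+1$ matters — is absorbed using $\|\Lambda^{2\alpha}\mathbf u\|_{H^{s}}\lesssim\|\Lambda^\alpha\mathbf u\|_{H^{s+1}}$ (valid since $2\alpha\le \alpha+1$, i.e.\ $\alpha\le1$) against a small multiple of $\|\nabla\rho\|_{H^s}$ plus the dissipation. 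Adding a small multiple of this ``interaction functional'' to the basic energy produces a Lyapunov functional equivalent to $\|(\rho,\mathbf u)\|_{H^{s+1}}^2$ whose time derivative is bounded above by $-c(\|\nabla\rho\|_{H^s}^2+\|\Lambda^\alpha\mathbf u\|_{H^{s+1}}^2)+C\|(\rho,\mathbf u)\|_{E(T)}(\|\nabla\rho\|_{H^s}^2+\|\Lambda^\alpha\mathbf u\|_{H^{s+1}}^2)$; integrating in time on $[0,T]$ gives exactly $\|(\rho,\mathbf u)\|_{E(T)}\lesssim\|(\rho,\mathbf u)\|_{E(0)}+\|(\rho,\mathbf u)\|_{E(T)}^2$.

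The Littlewood--Paley refinement is needed to make the low-order pieces honest without extra regularity. On low frequencies $\langle D\rangle\sim 1$, all norms are comparable and Bernstein's lemma (Lemma~\ref{2 2.7}) trades derivatives freely, so the transport and pressure nonlinearities are handled by the product laws of Lemma~\ref{2 2.8} in the $\sigma_1\le \frac d2<\sigma$ regime and by Lemma~\ref{2 2.5}/Lemma~\ref{2 2.9} for the composite coefficient $(\kappa+\frac1a\rho)^{-a}-\kappa^{-a}$; on high frequencies one uses the paradifferential form of the commutator estimates together with $H^s\hookrightarrow L^\infty$ ($s>\frac d2$, Lemma~\ref{2 2.3}) to put the worst factor in $L^\infty$ and the remaining two derivatives on the dissipative quantity. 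The convective term $\mathbf u\cdot\nabla\rho$ in the mass equation is the classical danger point — it has no dissipation to absorb it — but $\langle[\Delta_j,\mathbf u\cdot\nabla]\rho,\Delta_j\rho\rangle$ is handled by Lemma~\ref{2 2.10} and the undifferentiated piece $\langle\mathbf u\cdot\nabla\Delta_j\rho,\Delta_j\rho\rangle$ is integrated by parts to leave $\|\nabla\mathbf u\|_{L^\infty}\|\Delta_j\rho\|_{L^2}^2$, summable with weight $2^{2j(s+1)}$ to $\|\nabla\mathbf u\|_{L^\infty}\|\rho\|_{H^{s+1}}^2\lesssim\|\Lambda^\alpha\mathbf u\|_{H^{s+1}}\|\rho\|_{H^{s+1}}^2$ (using $\alpha>\frac12$ so that $\|\nabla\mathbf u\|_{L^\infty}\lesssim\|\mathbf u\|_{H^{s+1}}^{\theta}\|\Lambda^\alpha\mathbf u\|_{H^{s+1}}^{1-\theta}$ or an interpolation of that flavor).

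I expect the main obstacle to be the fractional viscous term with its variable coefficient: the commutator $[\langle D\rangle^{s+1},(\kappa+\frac1a\rho)^{-a}]\Lambda^{2\alpha}\mathbf u$ loses $s+2\alpha$ derivatives on $\mathbf u$, of which only $s+1$ are controlled by the dissipation when $2\alpha\le 1$ but \emph{more} than $s+1$ when $2\alpha>1$; the fix is to bound the commutator not as a derivative hitting $\mathbf u$ but via (\ref{2.4}) so that one full derivative falls on the coefficient $\rho$ and only $\Lambda^{s+2\alpha-1}\mathbf u$ remains, which — since $2\alpha-1<1$, i.e.\ $\alpha<1$ — is interpolated between $\Lambda^s\mathbf u$ (energy) and $\Lambda^{s+1}\Lambda^\alpha\mathbf u$ (dissipation), closing the loop precisely in the range $\alpha\in(\frac12,1)$. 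Getting the bookkeeping of exponents right here, and likewise in the interaction-functional step, is the crux; everything else is the routine product/composition estimates supplied by the lemmas in Section~2.
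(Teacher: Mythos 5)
Your overall strategy coincides with the paper's: frequency-localized energy estimates on (\ref{2.1}), the interaction functional $\langle\nabla\rho_j,\mathbf{u}_j\rangle$ (the paper's (\ref{3.14}), (\ref{3.16}), added with small weights $\beta_1,\beta_2$) to generate the missing dissipation $\|\nabla\rho\|_{H^s}^2$, commutator splittings of the variable-coefficient fractional viscosity as in (\ref{3.2}), (\ref{3.5}), (\ref{3.9}), the absorption $\|\Lambda^{2\alpha}\mathbf{u}_j\|\lesssim\|\Lambda^\alpha\mathbf{u}_j\|_{H^1}$ as in (\ref{3.23}), a separate low-frequency block handled by a non-localized estimate, and the product/composition lemmas of Section~2 to close. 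So the architecture is sound and essentially identical to the paper's proof.

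There is, however, one step that fails as written: your treatment of the convective term in the mass equation. After integrating by parts and summing the blocks you keep the bound $\|\nabla\mathbf{u}\|_{L^\infty}\,\|\rho\|_{H^{s+1}}^2\lesssim\|\Lambda^\alpha\mathbf{u}\|_{H^{s+1}}\,\|\rho\|_{H^{s+1}}^2$, i.e.\ a cubic term containing the dissipation norm only to the \emph{first} power and the full $H^{s+1}$ norm of $\rho$ (including its undissipated $L^2$ part). Integrating in time, this is not controlled by $\|(\rho,\mathbf{u})\|_{E(T)}^2$, nor by any $T$-independent function of $\|(\rho,\mathbf{u})\|_{E(T)}$: only $\int_0^T\|\Lambda^\alpha\mathbf{u}\|_{H^{s+1}}^2\,d\tau$ is available, not its $L^1_t$ norm, and Cauchy--Schwarz in time forces you against $\int_0^T\|\rho\|_{H^{s+1}}^4\,d\tau$, whose low-frequency part only obeys a $T$-growing bound. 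The paper avoids this by never leaving an $L^\infty$ factor to the first power: in the high blocks the transport terms are estimated by Young's inequality against the $\rho$-dissipation produced by the cross functional, yielding terms like $C_{\epsilon_1}\|\nabla\mathbf{u}\|_{L^\infty}^2\|\nabla^2\rho_j\|^2+\epsilon_1\|\Lambda^2\rho_j\|^2$ (see $I_8$--$I_{10}$), which after weighting and summation give $\|\mathbf{u}\|_{H^{s+1}}^2\|\nabla\rho\|_{H^s}^2$, i.e.\ $(\text{sup-controlled energy})^2\times(\text{time-integrable dissipation integrand})$; and at the lowest frequency the derivative is kept on $\rho$ via H\"older--Sobolev, as in the estimate of $S_1$, $\|\mathbf{u}\|_{L^3}\|\nabla\rho\|\|\rho\|_{L^6}\lesssim\|\nabla\mathbf{u}\|^2\|\rho\|_{H^1}^2+\epsilon_1\|\nabla\rho\|^2$, so that the quadratic-in-$E(T)$ structure of (\ref{3.29}) survives. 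Your framework contains all the ingredients for this fix (the cross functional supplies the $\|\nabla\rho\|_{H^s}^2$ to absorb the $\epsilon$-piece), but the bookkeeping must be redone so that every $L^\infty$ factor appears squared and every density factor on the right carries at least one derivative (or is handled at low frequency as above); otherwise the claimed inequality $\|(\rho,\mathbf{u})\|_{E(T)}\lesssim\|(\rho,\mathbf{u})\|_{E(0)}+\|(\rho,\mathbf{u})\|_{E(T)}^2$ does not follow.
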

\begin{proof}
Multiplying $\Delta_j\rho\Delta_j$ and $\Delta_j\mathbf{u}\Delta_j$ on both sides of $(\ref{2.1})_1$ and $(\ref{2.1})_2$ respectively, summing up and integrating over $\mathbb{R}^3$, we have
\begin{equation}\label{3.1}

\end{equation}
where we have used $H^r\hookrightarrow L^\infty(r>\frac{d}{2})$.
\end{proof}
\section{Proof of Theorem 1.1}
In this section, we shall use four steps to prove the existence and uniqueness of (\ref{1.1}) with the small initial data.

{\bf Step 1: Construction of the approximate solutions}. We first construct the approximate solutions: as in \cite{LYZ}, this is based heavily on the classical Friedrich's method. Defining the smoothing operator
$$\mathcal{J}_\varepsilon f=\mathcal{F}^{-1}(1_{0\leq|\xi|\leq\frac{1}{\varepsilon}}\mathcal{F}f),$$
we consider the approximate system of (\ref{2.1}):
\begin{equation}\label{5.1}
\frac{\partial U_\varepsilon}{\partial t}=F_\varepsilon(U_\varepsilon),\ U_\varepsilon=(\rho_\varepsilon,\mathbf{u}_\varepsilon).
\end{equation}
$F_\varepsilon(U_\varepsilon)=(F_\varepsilon^{(1)}(U_\varepsilon),F_\varepsilon^{(2)}(U_\varepsilon))$ be defined by
\begin{equation}\label{5.2}
\left\{
\begin{array}{lc}
F_\varepsilon^{(1)}(U_\varepsilon)=-\kappa{\rm div}(\mathcal{J}_\varepsilon \mathbf{u}_\varepsilon)-\mathcal{J}_\varepsilon(\mathcal{J}_\varepsilon \mathbf{u}_\varepsilon\cdot\nabla \mathcal{J}_\varepsilon\rho_\varepsilon)-\frac{1}{a}\mathcal{J}_\varepsilon(\mathcal{J}_\varepsilon \rho_\varepsilon{\rm div} \mathcal{J}_\varepsilon \mathbf{u}_\varepsilon),\\[2mm]
F_\varepsilon^{(2)}(U_\varepsilon)=-\kappa\nabla\mathcal{J}_\varepsilon\rho_\varepsilon-\mu\Lambda^{2\alpha}\mathcal{J}_\varepsilon \mathbf{u}_\varepsilon-\mathcal{J}_\varepsilon(\mathcal{J}_\varepsilon(\frac{\mu'}{(\kappa+\frac{1}{a})^a}-\mu)_\varepsilon \Lambda^{2\alpha}\mathcal{J}_\varepsilon \mathbf{u}_\varepsilon)\\[2mm]
\ \ \ \ \ \ \ \ \ \ \ \ \ \ \ -J_\varepsilon(\mathcal{J}_\varepsilon \mathbf{u}_\varepsilon\cdot\nabla \mathcal{J}\varepsilon \mathbf{u}_\varepsilon)-\frac{1}{a}\mathcal{J}_\varepsilon(\mathcal{J}_\varepsilon\rho_\varepsilon\nabla \mathcal{J}\varepsilon\rho_\varepsilon).
\end{array}
\right.
\end{equation}
Using the fact that$\|\mathcal{J}_\varepsilon f\|_{H^k}\leq C(1+\frac{1}{\varepsilon^2})^{\frac{k}{2}}\|f\|_{L^2}$, it is easy to obtain,
$$\|F_\varepsilon(U_\varepsilon)\|_{L^2}\leq C_\varepsilon f(\|U_\varepsilon\|_{L^2}),$$
$$\|F_\varepsilon(U_\varepsilon)-F_\varepsilon(\tilde{U}_\varepsilon)\|_{L^2}\leq C_\varepsilon g(\|U_\varepsilon\|_{L^2},\|\tilde{U}_\varepsilon\|_{L^2})\|U_\varepsilon-\tilde{U}_\varepsilon\|_{L^2},$$
where $f$ and $g$ are polynomials with positive coefficients. Therefore, the approximate system can be viewed as an ODE system on $L^2$. By using Cauchy-Lipschitz theorem, we know that there exists a maximal time $T_\varepsilon>0$ and a unique $(\rho_\varepsilon,\mathbf{u}_\varepsilon)$ which is continuous in time with a value in $L^2$. $\mathcal{J}_\varepsilon^2=\mathcal{J}_\varepsilon$ ensures that $(\mathcal{J}_\varepsilon\rho_\varepsilon,\mathcal{J}_\varepsilon \mathbf{u}_\varepsilon)$ is also a solution of (\ref{5.1}). Thus, $(\rho_\varepsilon,\mathbf{u}_\varepsilon)=(\mathcal{J}_\varepsilon\rho_\varepsilon,\mathcal{J}_\varepsilon \mathbf{u}_\varepsilon)$. Then, $(\rho_\varepsilon,\mathbf{u}_\varepsilon)$ satisfies:
\begin{equation}\label{5.3}
\left\{
\begin{array}{lc}
\partial_t\rho_\varepsilon+\kappa{\rm div}(\mathbf{u}_\varepsilon)=-\mathcal{J}_\varepsilon( \mathbf{u}_\varepsilon\cdot\nabla \rho_\varepsilon)-\frac{1}{a}\mathcal{J}_\varepsilon(\rho_\varepsilon{\rm div} \mathbf{u}_\varepsilon),\\[2mm]
\partial_t\mathbf{u}_\varepsilon+\kappa\nabla\rho_\varepsilon-\mu\Lambda^{2\alpha}\mathbf{u}_\varepsilon-\mathcal{J}_\varepsilon((\frac{\mu'}{(\kappa+\frac{1}{a})^a}-\mu)_\varepsilon \Lambda^{2\alpha}\mathbf{u}_\varepsilon)-\mathcal{J}_\varepsilon(\mathbf{u}_\varepsilon\cdot\nabla \mathbf{u}_\varepsilon)-\frac{1}{a}J_\varepsilon(\rho_\varepsilon\nabla\rho_\varepsilon).
\end{array}
\right.
\end{equation}
Moreover, we can also conclude that $(\rho_\varepsilon, \mathbf{u}_\varepsilon)\in E(T_\epsilon)$.

{\bf Step 2: Uniform energy estimates}. Now we will prove that $\|(\rho_\varepsilon,\mathbf{u}_\varepsilon)\|_{E(T)}$ is uniformly bounded independently of $\varepsilon$ by losing energy estimate.
Suppose that $\eta$ is small such that $\|\rho_0\|_{L^\infty}\leq\frac{1}{4}$. Since the solution depends continuously on the time variable, then there exist $0<T_0<T_\varepsilon$ and a positive constant $\mathfrak{C}$ such that the solution $(\rho_\varepsilon,u_\varepsilon)$ satisfies that
$$
\|\rho_\varepsilon(t,\cdot)\|_{L^\infty}\leq\frac{1}{2}\ for\ all\ t\in[0,T_0],
$$
$$
\|(\rho_\varepsilon,\mathbf{u}_\varepsilon)\|_{E(T_0)}\leq2\mathfrak{C}\|(\rho,\mathbf{u})\|_{E(0)}.
$$
We suppose that $T_0$ is a maximal time so that the above inequalities hold without loss of generality. Next, we will get a refined estimate on $[0, T_0]$ for the solution. According to Proposition \ref{3 3.1}, for all $0<T\leq T_0$, one has
\begin{equation}\label{5.4}
\begin{array}{ll}
\|(\rho_\varepsilon,\mathbf{u}_\varepsilon)\|_{E(T)}&\leq\mathfrak{C}\|(\rho,\mathbf{u})\|_{E(0)}+\mathfrak{C}\|(\rho_\varepsilon,\mathbf{u}_\varepsilon)\|_{E(T)}^2\leq\mathfrak{C}\|(\rho,\mathbf{u})\|_{E(0)}(1+4\mathfrak{C}^2\eta).
\end{array}
\end{equation}
Let $\eta<\frac{1}{8\mathfrak{C}^2}$, then
\begin{equation}\label{5.5}
\begin{array}{ll}
\|(\rho_\varepsilon,\mathbf{u}_\varepsilon)\|_{X(T_0)}\leq\frac{3}{2}\mathfrak{C}\|(\rho,\mathbf{u})\|_{E(0)}<2\mathfrak{C}\eta.
\end{array}
\end{equation}
A standard bootstrap argument yields for all $0<T<\infty$ that
$$
\|(\rho_\varepsilon,\mathbf{u}_\varepsilon)\|_{E(T)}\leq 2\mathfrak{C}\|(\rho,\mathbf{u})\|_{E(0)}.
$$
{\bf Step 3: Existence of the solution}. $(\rho,\mathbf{u})\in E(T)$ of (\ref{2.1}) can be deduced by a standard compactness argument to the approximation sequence $(\rho_\varepsilon,u_\varepsilon)$; for convenience, we omit here. Moreover, it holds for all $0<T<\infty$ that
$$
\|(\rho,\mathbf{u})\|_{E(T)}\leq 2\mathfrak{C}\|(\rho,\mathbf{u})\|_{E(0)}.
$$
{\bf Step 4: Uniqueness of the solution}. The uniqueness of the solution can be guaranteed by Step 3 as in \cite{LYZ}, thus we omit here.

\section{Optimal decay rates}
\subsection{The energy estimates on the highest-order derivative}
In this section, based on the energy method together with low-high frequency decompositon, we get the estimates on the ${\sigma_0}$-order (${\sigma_0}>\frac{5}{2}$) derivative of solution $(\rho,\mathbf{u})$ to the Cauchy problem (\ref{2.1})-(\ref{2.2}).

\begin{lemma}\label{4 4.1}
There exist suitably small constants $\beta_3>0$ and $\epsilon_1>0$ such that
\begin{equation}\label{4.1}

\end{equation}
where $F(U)=(F_1(U),F_2(U))$.
\end{lemma}

In this subsection, by combining Lemma \ref{4 4.1} with Lemma \ref{4 2.1} and Lemma \ref{4 3.1}, we get the time-decay rates of the solution to the nonlinear Cauchy problem of (\ref{2.1}).
\begin{lemma}\label{4 3.2}
Under the assumption of Theorem \ref{1 1.2}, it holds that
\begin{equation}\label{4.20}
\|\Lambda^\sigma(\rho,\mathbf{u})(t)\|_{L^2(\mathbb{R}^3)}\lesssim (1+t)^{-\frac{3}{4\alpha}-\frac{\sigma}{2}},\ \ 0\leq \sigma\leq {\sigma_0}.
\end{equation}
\end{lemma}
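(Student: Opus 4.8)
The plan is a continuity (bootstrap) argument for the time-weighted quantity
\[
\mathcal{N}(t):=\sup_{0\le\tau\le t}\ \sup_{0\le\sigma\le\sigma_0}(1+\tau)^{\frac{3}{4\alpha}+\frac{\sigma}{2}}\,\|\Lambda^{\sigma}(\rho,\mathbf{u})(\tau)\|,
\]
the aim being to show $\mathcal{N}(t)\lesssim\|U_0\|_{L^1}+\|U_0\|_{H^{\sigma_0}}$ uniformly in $t$; since $\mathcal{N}(0)\lesssim\|U_0\|_{H^{\sigma_0}}$ and $\mathcal{N}$ is continuous in $t$ by Theorem \ref{1 1.1}, this is exactly (\ref{4.20}). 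Throughout I would freely use the uniform bound $\|(\rho,\mathbf{u})(t)\|_{H^{\sigma_0}}\lesssim\eta$ from Theorem \ref{1 1.1}, the Sobolev and interpolation inequalities of Section 2, the time-convolution Lemma \ref{2 2.4}, and the frequency splitting $f=f^L+f^H$ of the appendix together with the elementary facts $\|f^H\|\lesssim\|\Lambda^{\sigma_0}f\|$ and $\|\Lambda^{\sigma_1}f^L\|\lesssim\|\Lambda^{\sigma_2}f^L\|$ whenever $\sigma_1\ge\sigma_2$.

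First I would estimate the quadratic nonlinearities $F(U)=(F_1,F_2)$ of (\ref{4.18}) in $L^1$ and $L^2$ under the bootstrap hypothesis $\mathcal{N}(t)\le\delta$. Every term of $F$ is a product of two factors, each a (fractional) derivative of $(\rho,\mathbf{u})$ of order $\le\max\{1,2\alpha\}<\sigma_0$; on one factor one uses the decay built into $\mathcal{N}$, and on the other either that decay or the uniform $H^{\sigma_0}$-bound (through $H^{r}\hookrightarrow L^\infty$ for $r>\tfrac d2$ and $\|g\|_{L^\infty}\lesssim\|g\|^{1-\frac{3}{2\sigma_0}}\|\Lambda^{\sigma_0}g\|^{\frac{3}{2\sigma_0}}$), interpolating between the two in order to keep a small power of $\eta$ while retaining enough decay. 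This yields
\[
\|F(U)(\tau)\|_{L^1}\lesssim(1+\tau)^{-\frac{3}{2\alpha}-\frac12}\,\Phi(t),\qquad
\|F(U)(\tau)\|_{L^2}\lesssim(1+\tau)^{-\frac{3}{4\alpha}-\frac{\sigma_0}{2}}\,\Phi(t),
\]
where $\Phi(t)$ is super-linear in $\mathcal{N}(t)$ with a small coefficient (schematically $\Phi(t)\approx\eta^{\theta_0}\mathcal{N}(t)^{1+\theta_1}$, $\theta_0,\theta_1>0$), and where the $L^2$-rate is precisely where the hypothesis $\sigma_0<\frac{3+4\alpha}{2}$ enters, making the embedding $\dot H^{\frac{3+4\alpha}{2}-\sigma_0}\hookrightarrow L^{p}$ and the interpolation of $\|\Lambda^{2\alpha}\mathbf{u}\|$ available with the required exponents. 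Feeding these into Lemma \ref{4 3.1} with $j=0$ and with $j=\sigma_0$ (the latter in its natural fractional form, which follows from the same spectral analysis of \cite{Wangs1}, the low-frequency part of the semigroup behaving like $e^{-c|\xi|^{2\alpha}t}$), and estimating the time integrals with Lemma \ref{2 2.4}, one obtains $\|\Lambda^{\sigma}(\rho^{L},\mathbf{u}^{L})(t)\|\lesssim(1+t)^{-\frac{3}{4\alpha}-\frac{\sigma}{2}}(\|U_0\|_{L^1}+\Phi(t))$ for $\sigma\in\{0,\sigma_0\}$.

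Next I would upgrade the top-order low-frequency bound to a full-frequency one via the cancellation estimate (\ref{4.01}) of Lemma \ref{4 2.1}: inserting $\|\Lambda^{\sigma_0}(\rho^{L},\mathbf{u}^{L})(\tau)\|^2\lesssim(1+\tau)^{-\frac{3}{2\alpha}-\sigma_0}(\,\cdots\,)$ and using $\int_0^{t}e^{-C_2(t-\tau)}(1+\tau)^{-\frac{3}{2\alpha}-\sigma_0}d\tau\lesssim(1+t)^{-\frac{3}{2\alpha}-\sigma_0}$ gives $\|\Lambda^{\sigma_0}(\rho,\mathbf{u})(t)\|\lesssim(1+t)^{-\frac{3}{4\alpha}-\frac{\sigma_0}{2}}(\|U_0\|_{H^{\sigma_0}}+\|U_0\|_{L^1}+\Phi(t))$. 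For $\sigma=0$ I combine the low-frequency bound with $\|(\rho^{H},\mathbf{u}^{H})\|\lesssim\|\Lambda^{\sigma_0}(\rho,\mathbf{u})\|$; for $0<\sigma<\sigma_0$ I simply interpolate $\|\Lambda^{\sigma}(\rho,\mathbf{u})\|\le\|(\rho,\mathbf{u})\|^{1-\sigma/\sigma_0}\|\Lambda^{\sigma_0}(\rho,\mathbf{u})\|^{\sigma/\sigma_0}$, and in every case the weight $(1+t)^{\frac{3}{4\alpha}+\frac{\sigma}{2}}$ is exactly absorbed. Taking suprema gives $\mathcal{N}(t)\lesssim\|U_0\|_{L^1}+\|U_0\|_{H^{\sigma_0}}+\Phi(t)$, and since $\Phi$ is super-linear with a small coefficient, a standard continuity argument (with $\eta$ small relative to the $L^1$-bound) closes the bootstrap and proves (\ref{4.20}).

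The main obstacle is securing the \emph{sharp} rate $(1+t)^{-\frac{3}{4\alpha}-\frac{\sigma_0}{2}}$ for the highest derivative. The pure energy identity of Lemma \ref{4 4.1} controls $\tfrac{d}{dt}$ of the top-order energy only up to the dissipation-supercritical terms $\|\Lambda^{\frac{3+4\alpha}{2}}\mathbf{u}\|^2$ and $\|\Lambda^{\sigma_0-1}\Lambda^{2\alpha}\mathbf{u}\|^2$, which cannot be absorbed by $\|\Lambda^{\sigma_0}\Lambda^\alpha\mathbf{u}\|^2$ and would otherwise destroy the rate; the device that saves the argument is the cancellation of Lemma \ref{4 2.1}, which replaces these by their low-frequency counterparts (legitimate precisely because $\sigma_0<\frac{3+4\alpha}{2}$) so that they are governed by the linear decay of Lemma \ref{4 3.1}. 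A secondary technical point is careful bookkeeping in the nonlinear estimates: one must decide, term by term and according to whether the relevant Duhamel integral carries $(1+t-\tau)^{-\frac{3}{4\alpha}-\frac{\sigma_0}{2}}$ or $(1+t-\tau)^{-\frac{\sigma_0}{2}}$, how to split the small $H^{\sigma_0}$-norm against the decaying weighted norm, so that the final inequality is genuinely of the form $\mathcal{N}\lesssim(\text{data})+\eta^{\theta_0}\mathcal{N}^{1+\theta_1}$ rather than merely $\mathcal{N}\lesssim(\text{data})+\mathcal{N}^2$, which is what is needed since $\|U_0\|_{L^1}$ is only assumed bounded.
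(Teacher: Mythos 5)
Your proposal is correct and follows essentially the same route as the paper: a time-weighted quantity ($M(t)$ in the paper), $L^1$/$L^2$ bounds on $F(U)$ split between the small $H^{\sigma_0}$ energy and the decaying weighted norm (the paper's $\eta^{1-\epsilon_2}M^{1+\epsilon_2}$), Duhamel via Lemma \ref{4 3.1} for the low frequencies, the Gronwall-type cancellation estimate of Lemma \ref{4 2.1} for the top-order full-frequency norm, and a closure of the form $M^2\lesssim K_0+\eta^{\theta}M^{2+\theta'}$ by continuity. The only cosmetic deviation is that for intermediate $0<\sigma<\sigma_0$ you interpolate between $\sigma=0$ and $\sigma=\sigma_0$, while the paper uses the frequency splitting $\|\Lambda^\sigma f\|\lesssim\|\Lambda^\sigma f^L\|+\|\Lambda^{\sigma_0}f\|$; both yield (\ref{4.20}).
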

\begin{proof}
Denote that
$$M(t):=\sup\limits_{0\leq\tau\leq t}\sum\limits_{m=0}^{\sigma_0}(1+\tau)^{\frac{3}{4\alpha}+\frac{m}{2}}\|\Lambda^m(\rho,\mathbf{u})\|.$$
Notice that $M(t)$ is non-decreasing, then for $0\leq m\leq {\sigma_0}$,
$$\|\Lambda^m(\rho,\mathbf{u})(\tau)\|\leq C_3(1+\tau)^{-\frac{3}{4\alpha}-\frac{m}{2}}M(t),\ 0\leq\tau\leq t$$
holds true for some positive constant $C_3$ independent of $\eta$.
By using H\"{o}lder's inequality, we have
\begin{equation}\label{4.21}
\begin{array}{ll}
\|F(U)(\tau)\|_{L^1}&\!\!\!\!\displaystyle\lesssim\|\mathbf{u}\|(\|\nabla\rho\|+\|\nabla \mathbf{u}\|)+\|\rho\|(\|{\rm div}\mathbf{u}\|+\|\nabla\rho\|)+\|\rho\|\|\Lambda^{2\alpha}\mathbf{u}\|\\[2mm]
&\!\!\!\!\displaystyle\lesssim \eta M(t)(1+\tau)^{-\frac{3}{4\alpha}-\frac{1}{2}},
\end{array}
\end{equation}
and
\begin{equation}\label{4.22}
\begin{array}{ll}
\|F(U)(\tau)\|&\!\!\!\!\displaystyle\lesssim\|\mathbf{u}\|_{L^3}(\|\nabla\rho\|_{L^6}+\|\nabla \mathbf{u}\|_{L^6})+\|\rho\|_{L^3}(\|{\rm div}\mathbf{u}\|_{L^6}+\|\nabla\rho\|_{L^6})\\[2mm]
&\!\!\!\!\displaystyle\quad+\|\rho\|_{L^{\frac{6}{2{\sigma_0}-4\alpha}}}\|\Lambda^{2\alpha}\mathbf{u}\|_{L^{\frac{6}{3-2{\sigma_0}+4\alpha}}}\\[2mm]
&\!\!\!\!\displaystyle\lesssim\|\mathbf{u}\|_{H^1}(\|\nabla\nabla\rho\|+\|\nabla\nabla \mathbf{u}\|)+\|\rho\|_{H^1}(\|\nabla{\rm div}\mathbf{u}\|+\|\nabla\nabla\rho\|)\\[2mm]
&\!\!\!\!\displaystyle\quad+\|\Lambda^{\frac{3+4\alpha-2{\sigma_0}}{2}}\rho\|\|\Lambda^{\sigma_0} \mathbf{u}\|\\[2mm]
&\!\!\!\!\displaystyle\lesssim \eta^{1-\epsilon_2} M(t)^{1+\epsilon_2}(t)(1+\tau)^{-\frac{3}{4\alpha}-1-\frac{3}{4\alpha}\epsilon_2},
\end{array}
\end{equation}
where $\epsilon_2\in(0,\frac{1}{2})$. By [15, Lemma 2.5], Lemma \ref{4 3.1}, (\ref{4.21}) and (\ref{4.22}), we have for $0\leq \sigma\leq {\sigma_0}$,
\begin{equation}\label{4.23}
\begin{array}{ll}
\|(\Lambda^\sigma\rho^L,\Lambda^\sigma\mathbf{u}^L)(t)\|&\!\!\!\!\displaystyle\lesssim(1+t)^{-\frac{3}{4\alpha}-\frac{\sigma}{2}}\|(\rho_0,\mathbf{u}_0)\|_{L^1(\mathbb{R}^3)}\\
&\!\!\!\!\displaystyle\quad\displaystyle+\eta M(t)\int_0^{\frac{t}{2}}(1+t-\tau)^{-\frac{3}{4\alpha}-\frac{\sigma}{2}}(1+\tau)^{-\frac{3}{4\alpha}-\frac{1}{2}}d\tau\\[2mm]
&\!\!\!\!\displaystyle\quad+\eta^{1-\epsilon_2} M(t)^{1+\epsilon_2}(t)\int_{\frac{t}{2}}^t(1\!+\!t\!-\!\tau)^{-\frac{\sigma}{2}}(1+\tau)^{-\frac{3}{4\alpha}-1-\frac{3}{4\alpha}\epsilon_2}d\tau\\[2mm]
&\!\!\!\!\displaystyle\lesssim\big(\|(\rho_0,\mathbf{u}_0)\|_{L^1(\mathbb{R}^3)}+\eta M(t)+\eta^{1-\epsilon_2} M(t)^{1+\epsilon_2}(t)\big)(1+t)^{-\frac{3}{4\alpha}-\frac{\sigma}{2}}.
\end{array}
\end{equation}
It follows from (\ref{4.17}) that
\begin{equation}\label{4.24}
\begin{array}{ll}
\|(\Lambda^{\sigma_0}\rho,\Lambda^{\sigma_0}\mathbf{u})(t)\|^2&\!\!\!\!\displaystyle\lesssim\displaystyle e^{-C_2t}(\|(\Lambda^{\sigma_0}\rho_0,\Lambda^{\sigma_0}\mathbf{u}_0)\|^2)+\int_0^te^{-C_2(t-\tau)}\big(\|(\rho_0,\mathbf{u}_0)\|_{L^1(\mathbb{R}^3)}^2\\[2mm]
&\!\!\!\!\displaystyle\quad+\eta^2 M^2(t)+\eta^{2-2\epsilon_2} M(t)^{2+2\epsilon_2}(t)\big)(1+\tau)^{-\frac{3}{2\alpha}-{\sigma_0}}d\tau\\[2mm]
&\!\!\!\!\displaystyle\lesssim\big(\|(\rho_0,\mathbf{u}_0)\|_{H^{\sigma_0}\cap L^1}+\eta^2 M^2(t)+\eta^{2-2\epsilon_2} M(t)^{2+2\epsilon_2}(t)\big)(1+\tau)^{-\frac{3}{2\alpha}-{\sigma_0}}.
\end{array}
\end{equation}
Moreover, using the decomposition (\ref{6.3}) gives for $0\leq \sigma\leq {\sigma_0}$ that
\begin{equation}\label{4.25}
\begin{array}{ll}
\|(\Lambda^\sigma\rho,\Lambda^\sigma\mathbf{u})(t)\|^2&\!\!\!\!\displaystyle\lesssim\|(\Lambda^\sigma\rho,\Lambda^\sigma\mathbf{u})^L(t)\|^2+\|(\Lambda^\sigma\rho,\Lambda^\sigma\mathbf{u})^H(t)\|^2\\[2mm]
&\!\!\!\!\displaystyle\lesssim\|(\Lambda^\sigma\rho,\Lambda^\sigma\mathbf{u})^L(t)\|^2+\|(\Lambda^{\sigma_0}\rho,\Lambda^{\sigma_0}\mathbf{u})(t)\|^2.
\end{array}
\end{equation}
From (\ref{4.23}), (\ref{4.24}) and (\ref{4.25}), for $0\leq \sigma\leq {\sigma_0}$, we have
\begin{equation}\label{4.26}
\begin{array}{ll}
\|(\Lambda^\sigma\rho,\Lambda^\sigma\mathbf{u})(t)\|^2\lesssim\big(\|(\rho_0,\mathbf{u}_0)\|_{H^{\sigma_0}\cap L^1}+\eta^2 M^2(t)+\eta^{2-2\epsilon_1} M(t)^{2+2\epsilon_2}(t)\big)(1+\tau)^{-\frac{3}{2\alpha}-\sigma}.
\end{array}
\end{equation}
By noting the definition of $M(t)$ and using the smallness of $\eta$, from (\ref{4.26}), there exists a positive constant $C_4$ independent of $\eta$ such that
\begin{equation}\label{4.27}
\begin{array}{ll}
M^2(t)\leq C_4\big(\|(\rho_0,\mathbf{u}_0)\|_{H^{\sigma_0}\cap L^1}+\eta^2 M^2(t)+\eta^{2-2\epsilon_2} M(t)^{2+2\epsilon_2}(t)\big).
\end{array}
\end{equation}
By using Young's inequality, we obtain
\begin{equation}\label{4.28}
C_4\eta^{2-2\epsilon_2} M(t)^{2+2\epsilon_2}(t)\leq\frac{1-\epsilon_2}{2}C_4^{\frac{2}{1-\epsilon_2}}+\frac{1+\epsilon_2}{2}\eta^{\frac{4(1-\epsilon_2)}{1+\epsilon_2}}M^4(t).
\end{equation}
For simplicity, we denote
\begin{equation}\label{4.29}
K_0:=C_4\|(\rho_0,\mathbf{u}_0)\|_{H^{\sigma_0}\cap L^1}+\frac{1-\epsilon_2}{2}C_4^{\frac{2}{1-\epsilon_2}},
\end{equation}
and
\begin{equation}\label{4.30}
C_\eta:=\frac{1+\epsilon_2}{2}\eta^{\frac{4(1-\epsilon_2)}{1+\epsilon_2}}.
\end{equation}
From (\ref{4.27}) and the smallness of $\eta$, we have
\begin{equation}\label{4.31}
M^2(t)\leq K_0+C_4M^4(t).
\end{equation}
Notice that $M(t)$ is non-decreasing and continuous, we have $M(t)\leq C$ for any $t\in[0,+\infty)$. This implies
$$
\|\Lambda^\sigma(\rho,\mathbf{u})(t)\|_{L^2(\mathbb{R}^3)}\lesssim (1+t)^{-\frac{3}{4\alpha}-\frac{\sigma}{2}},\ 0\leq\sigma\leq\sigma_0,
$$
and completes the proof.
\end{proof}
\section{Appendix}
\quad\quad In order to obtain the optimal decay rates of the solution, we need the content in Appendix.
First, we need the frequency decomposition of the solution:
\begin{equation*}\label{6.1}
f^l(x)=\chi_0(\partial_x)f(x),\ f^h(x)=\chi_1(\partial_x)f(x),\ f^m(x)=(1-\chi_0(\partial_x)-\chi_1(\partial_x))f(x).
\end{equation*}
Here $\chi_0(\partial_x)=\mathcal{F}^{-1}(\chi_0(\xi))$ and $\chi_1(\partial_x)=\mathcal{F}^{-1}(\chi_1(\xi))$ satisfy
$0\leq\chi_0(\xi),\chi_1(\xi)\leq1$,
and
\begin{equation*}\label{6.2}
\chi_0(\xi)=\left\{\begin{array}{rl}
1,\ |\xi|<\frac{r_0}{2},\\
0,\ |\xi|>r_0,
\end{array}\right.
\ \ \chi_1(\xi)=\left\{\begin{array}{ll}
0,\ |\xi|<R_0,\\
1,\ |\xi|>R_0+1,
\end{array}\right.
\end{equation*}
for some fixed $r_0$ and $R_0$. Therefore, it is easy to see that
\begin{equation}\label{6.3}
f(x)=f^l(x)+f^m(x)+f^h(x)=:f^L(x)+f^h(x)=:f^l(x)+f^H(x),
\end{equation}
where $f^L(x)=f^l(x)+f^m(x)$ and $f^H(x)=f^m(x)+f^h(x)$, and have the following inequalities:

\begin{lemma}\label{l 6.1}\cite{WW} For $f(x)\in H^s(\mathbb{R}^3)$ and any given integers $k,k_0,k_1$ with $k_0\leq k\leq k_1\leq s$, it holds that
\begin{equation*}
\begin{array}{rl}
&\|\partial_x^kf^l\|_{L^2(\mathbb{R}^3)}\leq r_0^{k-k_0}\|\partial_x^{k_0}f^l\|_{L^2(\mathbb{R}^3)},\ \ \ \|\partial_x^kf^l\|_{L^2(\mathbb{R}^3)}\leq \|\partial_x^{k_1}f\|_{L^2(\mathbb{R}^3)},\\[2mm]
&\|\partial_x^kf^h\|_{L^2(\mathbb{R}^3)}\leq \frac{1}{R_0^{k_1-k}}\|\partial_x^{k_1}f^h\|_{L^2(\mathbb{R}^3)},\ \ \ \|\partial_x^kf^h\|_{L^2(\mathbb{R}^3)}\leq \|\partial_x^{k_1}f\|_{L^2(\mathbb{R}^3)},\\[2mm]
&\ \ \ \ \ r_0^k\|f^m\|_{L^2(\mathbb{R}^3)}\leq\|\partial_x^kf^m\|_{L^2(\mathbb{R}^3)}\leq R_0^k\|f^m\|_{L^2(\mathbb{R}^3)}.
\end{array}
\end{equation*}
\end{lemma}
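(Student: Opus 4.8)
The plan is to treat this as a collection of Bernstein-type inequalities and prove them directly on the Fourier side; the only inputs needed are Plancherel's identity, the support properties of the cut-offs $\chi_0,\chi_1$, namely $\operatorname{supp}\widehat{f^l}\subseteq\{|\xi|\le r_0\}$, $\operatorname{supp}\widehat{f^h}\subseteq\{|\xi|\ge R_0\}$ and $\operatorname{supp}\widehat{f^m}\subseteq\{r_0/2\le|\xi|\le R_0+1\}$, and the fact that $0\le\chi_0,\chi_1\le1$ so that the corresponding Fourier multipliers are $L^2$-contractions.

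First I would record that for any $g$ with $\hat g\in L^2$ and any nonnegative integer $m$ one has, by Plancherel, $\|\partial_x^m g\|_{L^2(\mathbb{R}^3)}^2\approx\int_{\mathbb{R}^3}|\xi|^{2m}|\hat g(\xi)|^2\,d\xi$. With this normalization, each claimed inequality becomes a pointwise comparison of the symbols $|\xi|^{2k},|\xi|^{2k_0},|\xi|^{2k_1}$ on the support of the function in question. For $f^l$ and $k_0\le k$, on $\{|\xi|\le r_0\}$ I write $|\xi|^{2k}=|\xi|^{2(k-k_0)}|\xi|^{2k_0}\le r_0^{2(k-k_0)}|\xi|^{2k_0}$ and integrate against $|\widehat{f^l}|^2$, giving the first low-frequency bound. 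For $f^h$ and $k\le k_1$, on $\{|\xi|\ge R_0\}$ I write $|\xi|^{2k}=|\xi|^{-2(k_1-k)}|\xi|^{2k_1}\le R_0^{-2(k_1-k)}|\xi|^{2k_1}$ and integrate against $|\widehat{f^h}|^2$, giving the first high-frequency bound. For $f^m$, on $\{r_0/2\le|\xi|\le R_0+1\}$ the symbol $|\xi|^{2k}$ is pinched between two fixed constants, which after the (purely notational) matching of $r_0/2,R_0+1$ with the $r_0,R_0$ appearing in the statement yields the two-sided estimate $r_0^k\|f^m\|\le\|\partial_x^k f^m\|\le R_0^k\|f^m\|$.

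The two coarser inequalities $\|\partial_x^k f^l\|\le\|\partial_x^{k_1}f\|$ and $\|\partial_x^k f^h\|\le\|\partial_x^{k_1}f\|$ for $k\le k_1$ follow in the same way: since $|\widehat{f^l}|,|\widehat{f^h}|\le|\hat f|$, it suffices to compare $|\xi|^{2k}$ with $|\xi|^{2k_1}$ on the respective supports, which is exactly where the normalization of the cut-off radii relative to $1$ enters ($r_0$ small, $R_0$ large, as fixed in \cite{WW}); I would simply adopt that normalization and cite \cite{WW}. The argument is entirely elementary, so I do not anticipate a genuine obstacle: the only points requiring a little care are the bookkeeping of the cut-off radii so that the constants match those in the statement, and fixing the convention (homogeneous versus inhomogeneous) for the $\partial_x^m$-norm consistently with the way the lemma is used in Section~5.
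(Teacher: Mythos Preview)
The paper does not prove this lemma itself; it is cited from \cite{WW}. Your Plancherel-based argument is the standard elementary route and correctly handles the first low-frequency bound, the first high-frequency bound, the two-sided $f^m$ estimate (up to the harmless constant mismatch $(r_0/2)^k,(R_0+1)^k$ versus $r_0^k,R_0^k$, which you already flagged), and the comparison $\|\partial_x^k f^h\|\le\|\partial_x^{k_1}f\|$.

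There is one genuine slip. For the second low-frequency inequality $\|\partial_x^k f^l\|\le\|\partial_x^{k_1}f\|$ with $k\le k_1$, you assert that taking ``$r_0$ small'' makes the symbol comparison $|\xi|^{2k}\le|\xi|^{2k_1}$ go through on $\{|\xi|\le r_0\}$. It does not: for $|\xi|\le 1$ and $k\le k_1$ one has $|\xi|^{2k}\ge|\xi|^{2k_1}$, the reverse direction. In fact the printed inequality is false in general---take $\hat f$ supported in $\{|\xi|\le\epsilon\}$ with $\epsilon<r_0/2$; then $f^l=f$ and $\|\partial_x^{k_1}f\|\sim\epsilon^{k_1}\|f\|\ll\epsilon^{k}\|f\|\sim\|\partial_x^k f^l\|$. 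This is almost certainly a typo in the paper for $\|\partial_x^k f^l\|\le\|\partial_x^{k_0}f\|$ (which your first estimate together with $|\widehat{f^l}|\le|\hat f|$ and $r_0\le1$ does give), or simply for $\|\partial_x^k f^l\|\le\|\partial_x^{k}f\|$; the erroneous form is never actually invoked in Section~5. You should flag the typo rather than try to justify a false inequality.
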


\bigbreak\bigbreak
\noindent{\bf Funding}: The research was supported by National Natural Science Foundation of China (No. 11971100), Natural Science Foundation of Shanghai (Grant No. 22ZR1402300) and the Fundamental Research Funds for the Central Universities
(No. 2232023A-02 and 2232024G-13).\\

\bibliographystyle{plain}

\begin{thebibliography}{99}

\bibitem{Abe}S. Abe, S. Thurner, Anomalous diffusion in view of Einsteins 1905 theory of Brownian motion, Physica A 356(2005), 403-407.



\bibitem{BCD}
H. Bahouri, J.Y. Chemin, R. Danchin, Fourier Analysis and Nonlinear Partial Differential Equations,
\emph{Grundlehren der Mathematischen Wissenschaften}, Vol. 343. Berlin, Heidelberg: Springer-Verlag, 2011.


\bibitem{Ca}L.A. Caffarelli, A. Vasseur, Drift diffusion equations with fractional diffusion and the quasi-geostrophic equation,
Ann. Math. 171(2010), 1903-1930.

\bibitem{CD1}
F. Charve, R. Danchin,
A global existence result for the compressible Navier-Stokes equations in the critical $L^p$ framework,
Arch. Ration. Mech. Anal. 198(2010), 233-271.

\bibitem{C}J.-Y. Chemin, Fluides parfaits incompressibles, \emph{Ast'erisque}, 230(1995).

\bibitem{CTT}
L. Chen, C. Tan, L. Tong, On the global classical solution to compressible Euler system with singular velocity alignment,
Methods Appl. Anal. 28(2021), 153-172.

\bibitem{CMZ2}
Q. Chen, C. Miao, Z. Zhang, Global well-posedness for compressible Navier-Stokes equations with highly oscillating initial velocity,
Comm. Pure. Appl. Math. 63(2010), 1173-1224.

\bibitem{CMZ3}
Q. Chen, C. Miao, Z. Zhang, On the ill-posedness of the compressible Navier-Stokes equations in the critical Besov spaces,
Rev. Mat. Iberoam. 31(2015), 1375-1402.
%

 \bibitem{CD2}
N. Chikami, R. Danchin, On the well-posedness of the full compressible Navier-Stokes system in critical Besov spaces,
J. Differential Equations 258(2015), 3435-3467.

\bibitem{CW}
F.M. Christ, M.I. Weinstein, Dispersion of small amplitude solutions of the generalized Korteweg-de Vries equation, J. Funct. Anal. 100(1991), 87-109.

\bibitem{D1}R. Danchin,
Global existence in critical spaces for compressible Navier-Stokes equations,
Invent. Math. 141(2000), 579-614.


\bibitem{D3}
R. Danchin,
Global existence in critical spaces for flows of compressible viscous and heat-conductive gases,
Arch. Rational Mech. Anal. 160(2001), 1-39.


\bibitem{DH}
R. Danchin, L. He,
The incompressible limit in $L^p$ type critical spaces,
Math. Ann. 366(2016), 1365-1402.


\bibitem{DUYZ}
R.J. Duan, S. Ukai, T. Yang, H.J. Zhao, Optimal convergence rate for the compressible Navier-Stokes equations with
potential force, Math. Models Methods Appl. Sci. 17(2007), 737-758.



\bibitem{feireisl1}E. Feireisl, Compressible Navier-Stokes equations with a non-monotone pressure law, J. Differential Equations 184(2002), 97-108.

%

\bibitem{feireisl4}E. Feireisl, A. Novotn\'y, H. Petzeltova\', On the global existence of globally defined weak solutions to the Navier-Stokes equations of isentropic compressible fluids, J. Math. Fluid Mech. 3(2001), 358-392.

\bibitem{guo1}Y. Guo, Y. J. Wang, Decay of dissipative equations and negative Sobolev spaces, Comm. Partial Differential Equations 37(2012), 2165-2208.

\bibitem{H}
B. Haspot, Well-posedness in critical spaces for the system of compressible Navier-Stokes in larger spaces, J. Differential Equations 251(2011), 2262-2295.

\bibitem{HHW1}
L. He, J. Huang, C. Wang, Global stability of large solutions to the 3D compressible Navier-Stokes equations, Arch. Rational Mech. Anal. 234(2019), 1167-1222.



\bibitem{HZ0}
D. Hoff, K. Zumbrun, Multi-dimensional diffusion waves for the Navier-Stokes equations of compressible flow, Indiana Univ. Math. J. 44(1995), 603-676.

\bibitem{HZ}
D. Hoff, K. Zumbrun, Pointwise decay estimates for multidimensional Navier-Stokes diffusion waves, Z. angew. Math. Phys. 48(1997), 597-614.



\bibitem{HLX}
X. Huang, J. Li, Z. Xin, Global well-posedness of classical solutions with large oscillations and vacuum to the three-dimensional isentropic compressible Navier-Stokes equations, Comm. Pure. Appl. Math. 65(2012), 549-585.

\bibitem{Jara}M. Jara, Nonequilibrium scaling limit for a tagged particle in the simple exclusion process with long jumps, Comm. Pure Appl. Math. 62(2009), 198-214.


\bibitem{J1}
S. Jiang, Large-time behavior of solutions to the equations of a viscous polytropic ideal gas, Ann. Mat. Pura. Appl. 175(1998), 253-275.

\bibitem{J2}
S. Jiang,
Large-time behavior of solutions to the equations of a one-dimensional viscous polytropic ideal gas in unbounded domains,
Commun. Math. Phys. 200(1999), 181-193.


\bibitem{Jiu}Q.S. Jiu, C.X. Miao, J.H. Wu, Z.F. Zhang, The two-dimensional incompressible Boussinesq equations with general critical dissipation, SIAM J. Math. Anal. 46(2014), 3426-3454.

\bibitem{Ki}A. Kiselev, F. Nazarov, A. Volberg, Global well-posedness for the critical 2D dissipative quasi-geostrophic equation,
Invent. Math. 167(2007), 445-453.

\bibitem{LT}
Y. Lee, C. Tan, A sharp critical threshold for a traffic flow model with look-ahead dynamics, Comm. Math. Sci. 20(2022), 1151-1172.

\bibitem{Li}H.L. Li, T. Zhang, Large time behavior of isentropic compressible Navier-Stokes system in
R3, Math. Meth. Appl. Sci. 34(2011), 670-682.


\bibitem{LX}
J. Li, Z. Xin,
Entropy bounded solutions to the one-dimensional compressible Navier-Stokes equations with zero heat conduction and far field vacuum,
Adv. Math. 361(2020), 106923.

\bibitem{LYZ}
J.L. Li, Z.Y. Yin, X.P. Zhai,
Global well-posedness for the full compressible Navier-Stokes equations, Acta Math. Sci. 42B(2022), 2131-2148.


\bibitem{LW}
T.P. Liu, W.K. Wang,
The pointwise estimates of diffusion wave for the Navier-Stokes systems in odd multi-dimension,
Comm. Math. Phys. 196(1998), 145-173.

\bibitem{MN1}
A. Matsumura, T. Nishida, The initial value problem for the equations of motion of viscous and heat-conductive gases, J. Math. Kyoto Univ. 20(1980), 67-104.

\bibitem{MN2}
A. Matsumura, T. Nishida, The initial value problem for the equations of motion of compressible viscous and heat-conductive fluids, J. Math. Kyoto Univ. 20(1980), 17408.

\bibitem{Mellet}A. Mellet, S. Mischler, C. Mouhot, Fractional diffusion limit for collisional kinetic equations, Arch. Ration. Mech. Anal. 199(2011), 493-525.

\bibitem{MV}
A. Mellet, A. Vasseur,
On the barotropic compressible Navier-Stokes equations,
Comm. Partial Differential Equations 32(2007), 431-452.

\bibitem{peng}H.Y. Peng, X.P. Zhai, The Cauchy problem for the $N$-dimensional compressible Navier-Stokes equations without heat conductivity, SIAM J. Math. Anal. 55(2023), 1439-1463.

\bibitem{SS}
M.E. Schonbek, T.P. Schonbek, Asymptotic behavior to dissipative quasi-geostrophic flows, SIAM J. Math. Anal. 35(2003), 357-375.

\bibitem{S}
E.M. Stein, Singular Integrals and Differentiability Properties of Functions, Princeton University Press, 1970.

\bibitem{Wangs1}S. Wang, S.Z. Zhang, The initial value problem for the equations of motion of fractional compressible viscous fluids, J. Differential Equations 377(2023), 369-417.

\bibitem{Wangs2}S. Wang, S.Z. Zhang, The global classical solution to compressible system with fractional viscous term, Nonlinear Analysis: Real World Applications 75(2024), 103963.

\bibitem{WW}
W.J. Wang, H.Y. Wen, Global well-posedness and time-decay estimates for compressible Navier-Stokes equations with reaction diffusion,
Sci. China Math. 65(2022), 1199-1228.


\bibitem{WZ}
H. Wen, C. Zhu,
Global solutions to the three-dimensional full compressible Navier-Stokes equations with vacuum at infinity in some classes of large data,
SIAM J. Math. Anal. 49(2017), 162-221.

\bibitem{Xin}Z. Xin, Blowup of smooth solutions to the compressible Navier-Stokes equation with compact density, Comm. Pure Appl. Math. 51(1998), 229-240.

\bibitem{XX}Z. Xin, J. Xu, Optimal decay for the compressible Navier-Stokes equations without additional smallness assumptions, J. Differential Equations 274(2021), 543-575.


\bibitem{ZLZ}
X.P. Zhai, Y. Li, F. Zhou,
Global large solutions to the three dimensional compressible Navier-Stokes equations,
SIAM J. Math. Anal. 52(2020), 1806-1843.

\bibitem{ZZ}
Z.F. Zhang, R.Z. Zi,
Convergence to equilibrium for the solution of the full compressible Navier-Stokes equations,
Ann. Inst. H. Poincar\'{e}. Anal. NonLineair\'{e}. 37(2020), 457-488.

\end{thebibliography}

\end{document}